\documentclass[12pt]{amsart}
\usepackage{amsmath, amsfonts, amssymb, amsthm,hyperref,mathtools,array}
\hypersetup{hypertex=true,
	colorlinks=true,
	linkcolor=blue,
	anchorcolor=blue,
	citecolor=blue}
\usepackage{bm}
\allowdisplaybreaks[4]
\def\({\bg(}
\def\){\bg)}

\def\pmod #1{\ ({\rm{mod}}\ #1)}

\def\qbinom #1#2#3{{\genfrac{[}{]}{0pt}{}{#1}{#2}}_{#3}}

\theoremstyle{plain}
\newtheorem{theorem}{Theorem}[section]
\newtheorem{lemma}{Lemma}
\newtheorem{corollary}{Corollary}
\newtheorem{conjecture}{Conjecture}

\theoremstyle{definition}

\theoremstyle{remark}

\makeatletter
\@namedef{subjclassname@2020}{%
	\textup{2020} Mathematics Subject Classification}
\makeatother
\vspace{4mm}

\allowdisplaybreaks

\begin{document}
	\medskip
	
	\title[Supercongruences Arising from Truncated Appell Series]
	{Supercongruences Arising from Truncated Appell Series}
	\author[ H.-X. Ni]{ He-Xia Ni*}

\address{(He-Xia Ni) Department of applied mathematics, Nanjing Audit University, Nanjing 211815, People's Republic of China}
\email{\tt nihexia@yeah.net}

	\keywords{ $q$-supercongruence; truncated Appell series;
		the cyclotomic polynomial; transformation formula.
		\newline \indent 2020 {\it Mathematics Subject Classification}. Primary 33D15; Secondary 11A07, 11B65.
		\newline \indent This research was supported by the National Natural Science Foundation of China (Grant No. 12371004).
		\newline \indent *Corresponding author}
	
	\begin{abstract}
		In recent years, both Appell series and truncated Appell series have garnered significant research interest among scholars. In this paper, we investigate the congruence properties of truncated Appell series and truncated $q$-Appell series. By employing various combinatorial identities and the `creative microscoping' method introduced by Guo and Zudilin, we provide $q$-analogues for several known congruences of truncated Appell series $F_1$ and $F_2$. Meanwhile, we establish a new $q$-supercongruence related to the truncated Appell series $F_4$, which provides a $q$-analogue for two conjectures by Apagodu and Zeilberger.
		
	\end{abstract}
	\maketitle

	\section{Introduction}
	\setcounter{lemma}{0}
\setcounter{theorem}{0}
\setcounter{equation}{0}
\setcounter{conjecture}{0}
\setcounter{remark}{0}
\setcounter{corollary}{0}

\subsection{Background and motivation}
In 1880, Appell  introduced four types of functions, which are double series in two variables obtained by extending the Gauss series through an increase in the number of variables. Such series are known as Appell series. 
The Appell series $F_1, F_2$, $F_3$ and $F_4$ are defined as follows:
\begin{align}
	F_{1}[a;b,b';c;x,y]&=\sum_{i,j=0}^{\infty}\frac{(a)_{i+j}(b)_{i}(b')_{j}}{i!j!(c)_{i+j}}x^iy^j,\label{Appellseries01}\\
	F_{2}[a;b,b';c,c';x,y]&=\sum_{i,j=0}^{\infty}\frac{(a)_{i+j}(b)_{i}(b')_{j}}{i!j!(c)_{i}(c')_j}x^iy^j,\label{Appellseries02}\\
	F_{3}[a,a';b,b';c;x,y]&=\sum_{i,j=0}^{\infty}\frac{(a)_{i}(a')_{j}(b)_{i}(b')_{j}}{i!j!(c)_{i+j}}x^iy^j,\label{Appellseries03}\\
	F_{4}[a,b;c,c';x,y]&=\sum_{i,j=0}^{\infty}\frac{(a)_{i+j}(b)_{i+j}}{i!j!(c)_{i}(c')_j}x^iy^j,\label{Appellseries04}
\end{align}
where $(a)_k=a(a+1)\cdots (a+k-1)$ for $k\geq 1$ and $(a)_0=1$ is the \textit{shifted factorial}.

On the other hand, many highly interesting and novel expansions of Appell's double hypergeometric functions have been obtained and discussed. These expansions were derived using operative functions constructed from hypergeometric functions, in which the parameters were replaced by the partial differential operators $x \frac{\partial}{\partial x}$ and $y \frac{\partial}{\partial y}$. In 1942, Jackson \cite{Jackson1942} defined the functions  $\Phi^{(1)}, \Phi^{(2)},\Phi^{(3)},\Phi^{(4)}$ via double hypergeometric series as follows:
\begin{align*}
		\Phi^{(1)}[a;b,b';c;q;x,y]_{\infty}&=\sum_{i,j=0}^{\infty}\frac{(a;q)_{i+j}(b;q)_{i}(b';q)_{j}}{(q;q)_i(q;q)_j(c;q)_{i+j}}x^iy^j,\notag\\
		\Phi^{(2)}[a;b,b';c,c';q;x,y]_{\infty}&=\sum_{i,j=0}^{\infty}\frac{(a;q)_{i+j}(b;q)_{i}(b';q)_{j}}{(q;q)_i(q;q)_j(c;q)_{i}(c';q)_{j}}x^iy^j,\notag\\
		\Phi^{(3)}[a,a';b,b';c;q;x,y]_{\infty}&=\sum_{i,j=0}^{\infty}\frac{(a;q)_{i}(a';q)_{j}(b;q)_{i}(b';q)_{j}}{(q;q)_i(q;q)_j(c;q)_{i+j}}x^iy^j,\notag\\
		\Phi^{(4)}[a,b;c,c';q;x,y]_{\infty}&=\sum_{i,j=0}^{\infty}\frac{(a;q)_{i+j}(b;q)_{i+j}}{(q;q)_i(q;q)_j(c;q)_{i}(c';q)_{j}}x^iy^j,
\end{align*}
 which serve as $q$-analogues of the Appell functions. Additionally, he demonstrated that these basic functions satisfy certain $q$-partial differential equations. In the limiting cases, these basic equations reduce to the ordinary linear partial differential equations satisfied by Appell functions \cite[213--214]{Sla1966}. 

In recent years, many scholars have investigated congruence properties related to Appell series. For instance, Lin \cite{Liu2019}, Lin and Liu \cite{LinLiu2020} introduced the truncated  Appell series, defined by setting the upper limit of the double summation in the series \eqref{Appellseries01}--\eqref{Appellseries04} to a finite number $n$, denoted as $F_{1}[a;b,b';c;x,y]_n, F_{2}[a;b,b';c,c';x,y]_n, F_{3}[a,a';b,b';c;x,y]_n, F_{4}[a,b;c,c';x,y]_n$. Similarly, we can define the truncated $q$-Appell series. For example, the truncated  $q$-Appell series $\Phi^{(1)}[a;b,b';c;q;x,y]_{n}, \Phi^{(4)}[a,b;c,c';q;x,y]_{n}$ can be defined as:
\begin{align*}
\Phi^{(1)}[a;b,b';c;q;x,y]_{n}&=\sum_{i,j=0}^{n}\frac{(a;q)_{i+j}(b;q)_{i}(b';q)_{j}}{(q;q)_i(q;q)_j(c;q)_{i+j}}x^iy^j,\notag\\
\Phi^{(4)}[a,b;c,c';q;x,y]_{n}&=\sum_{i,j=0}^{n}\frac{(a;q)_{i+j}(b;q)_{i+j}}{(q;q)_i(q;q)_j(c;q)_{i}(c';q)_{j}}x^iy^j.
\end{align*}

In 2017, Apagodu and Zeilberger \cite[Conjecture 5', Super-Conjecture 5,  Super-Conjecture 5']{AZ2017} investigated algorithms for proving combinatorial identities and proposed several conjectures related to Appell series $F_4$, such as: for any prime $p\geq 5$ and any pair of positive integers $r,s$,
\begin{align}
F_4[1,1;1,1;1,1]_{p-1}&\equiv \bigg(\frac{p}{3}\bigg)\pmod{p^2},\label{ApaZeilberger-conje-1}\\
\sum_{i=0}^{rp-1}\sum_{j=0}^{sp-1}\frac{(1)_{i+j}^2}{i!^2j!^2} &\equiv\bigg(\frac{p}{3}\bigg)\sum_{i=0}^{r-1}\sum_{j=0}^{s-1}\frac{(1)_{i+j}^2}{i!^2j!^2}\pmod{p^2},\label{ApaZeilberger-conje-2}
\end{align}
where $\bigg(\frac{\cdot}{p}\bigg)$ denotes the Legendre symbol modulo $p$. In the period around 2020, Liu \cite{Liu2019}, Lin and Liu \cite{LinLiu2020} investigated congruence properties of truncated Appell series and obtained several elegant results, such as: for any prime $p>3$,
\begin{align}
	F_{1}[\frac{1}{2};\frac{1}{2},\frac{1}{2};1;1,1]_{\frac{p-1}{2}}&\equiv p\pmod{p^2},\label{Appellseries06}\\
	F_{2}[\frac{1}{2};\frac{1}{2},\frac{1}{2};1,1;1,1]_{\frac{p-1}{2}}&\equiv\begin{cases}0 \pmod{p^2}&\text{if }p\equiv 3\pmod{4},\\[5pt]
		-\Gamma_p(\frac{1}{4})^4\pmod{p^2}&\text{if }p\equiv 1\pmod{4}.
	\end{cases}\label{Appellseries07}\\
	F_{4}[\frac{1}{2},\frac{1}{2};1,1;1,1]_{\frac{p-1}{2}} &\equiv\begin{cases}0 \pmod{p^2}&\text{if }p\equiv 2\pmod{3},\\[5pt]
		(-1)^{\frac{p+1}{2}}\Gamma_p(\frac{1}{6})^2\Gamma_p(\frac{1}{3})^2\pmod{p^2}&\text{if }p\equiv 1\pmod{3}.
	\end{cases}\notag
\end{align}

Furthermore, in 2020, Liu and Petrov \cite{LiuPetrov2020} provided a $q$-analogue of \eqref{ApaZeilberger-conje-1} as follows:
\begin{align*}
\sum_{i=0}^{n-1}\sum_{j=0}^{n-1}\frac{(q;q)_{i+j}^2}{(q;q)_i^2(q;q)_j^2}q^{i+j+j^2}	\equiv \bigg(\frac{n}{3}\bigg)q^{\frac{n^2-1}{3}}\pmod{\Phi_{n}(q)^2}.
\end{align*}
Recently, He and Wang \cite{Hewang2026} also provided an alternative $q$-analogue of \eqref{ApaZeilberger-conje-1} through  the $q$-Chu-Vandermonde summation formula and the $q$-Zeilberger algorithm as follows: for any positive integer $n$,
\begin{align}\label{Hewang01}
\Phi^{(4)}[q,q;q,q;q;q,q]_{n-1}	\equiv \bigg(\frac{n}{3}\bigg)q^{\frac{2(n^2-1)}{3}}\pmod{\Phi_{n}(q)^2}.
\end{align}

Here and subsequently, for any complex $a$, the {\it $q$-shifted factorial} \cite{GR} is defined as:   
$$
(a;q)_m=\begin{cases}
	(1-a)(1-aq)\cdots(1-aq^{m-1}) &\text{if }m\geq 1,\\[5pt]
	1 &\text{if }m=0,
\end{cases}
$$ and the {\it $q$-binomial coefficient} is defined by
$$
\qbinom{a}{m}{}=\qbinom{a}{m}q=\begin{cases}
	\dfrac{(q^{a-m+1};q)_{m}}{(q;q)_m} &\text{if }m\geq 1,\\[5pt]
	1 &\text{if }m=0.
\end{cases}
$$
In particular, we set $\qbinom{a}{m}q=0$ if $m<0$  and $[n]=[n]_q=(1-q^n)/(1-q)$. Moreover, the {\it $m$-th cyclotomic polynomial} is defined as
$$
\Phi_m(q)=\prod_{\substack{1\leq k\leq m\\ \gcd(m,k)=1}}(q-e^{2\pi i\cdot\frac{k}{m}}).
$$
To simplify, for $m\in \mathbb{N^{+}}$ and $n\in \mathbb{N}$, we also frequently use the following shortened symbols:
$$(a_1,\ldots, a_m;q)_n=(a_1;q)_n\cdots (a_m;q)_n.$$

Moreover, in 2025, Wang and Yu \cite{WY2025-02} had proposed a set of new congruences and $q$-congruences concerning truncated Appell numbers, including: for any prime $p$ and any positive integer $d$, if $p\equiv 1\pmod{2d}$, then
\begin{align*}
F_{1}[\frac{1}{d};\frac{1}{2d},\frac{1}{2d};1;1,1]_{\frac{p-1}{d}}\equiv\binom{-\frac{2}{d}}{\frac{p-1}{d}}\pmod{p^2}.
\end{align*}
For more related results and the latest progress on truncated Appell numbers, please refer to \cite{GuoZu-a, GuoZu-b, GuoZhu2020,G2024,Hewang2026,WY2025,W2024}.

\subsection{Main results}
Inspired by the aforementioned work, our primary objective is to provide $q$-analogues of formulas \eqref{Appellseries06}--\eqref{Appellseries07}. Moreover, we will present more supercongruences related to the truncated Appell series $F_4$.

\begin{theorem}\label{thm-a}
	Let $d$ and $r$ be integers such that $d\geq 2$ and $\gcd{(r,d)}=1$. Let $n$ be a positive integer satisfying $n\equiv r \pmod{d}$ and $1\leq \frac{n-r}{d} \leq \frac{n-1}{2}$. 
	Then the following results hold.
	
	{\rm (i)} If $d=2, r=1$, then 
	\begin{align}\label{thm-q-01}
		\Phi^{(1)}[q;q,q;q^2;q^2;q^2,1]_{\frac{n-1}{2}}\equiv[n]q^{\frac{n+1}{2}} \pmod{[n]\Phi_n(q)}.
	\end{align}

	{\rm (ii)} If $d>2$, we have  
	\begin{align}\label{thm-q-02}
	&\Phi^{(1)}[q^r;q^r,q^r;q^d;q^d;q^d,q^{d-2r}]_{\frac{n-r}{d}}\notag\\
	&\quad\equiv \frac{q^{r(n-r)/d}(q^{d-2r};q^d)_{\frac{n-r}{d}}}{(q^{d};q^d)_{\frac{n-r}{d}}}\bigg\{1-(1-q^n)\frac{n-r}{d}-[n]\sum_{j=1}^{\frac{n-r}{d}}\bigg(\dfrac{q^{dj-r}}{[dj-r]}+\dfrac{q^{dj-2r}}{[dj-2r]}\bigg)\bigg\} \pmod{\Phi_n(q)^2}.
	\end{align}

	{\rm (iii)} Moreover, modulo $\Phi_n(q)^2$,
	\begin{align}\label{thm-q-03}
	&\sum_{i=0}^{n-1}\sum_{j=0}^{n-1-i}\frac{(q^{d-r};q^{d})_{i+j}(q^{r};q^{d})_{i}(q^{r};q^{d})_{j}}{(q^d;q^d)_{i+j}(q^d;q^d)_i(q^d;q^d)_j}q^{di}\notag\\
	&\quad\equiv-q^{-r(n-r)/d}\frac{(q^{r+d};q^d)_{\frac{n-r-d}{d}}}{(q^{-r+d};q^d)_{\frac{n-r-d}{d}}}\bigg\{(d-1)q^{2r}\frac{1-q^n}{1-q^{2r}}+\frac{1}{n}(q^{d-2r};q^d)_{n-1}\bigg\}.
\end{align}

\end{theorem}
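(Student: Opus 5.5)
Throughout, write $m=\frac{n-r}{d}$, so that $q^{dm}=q^{n-r}$. The plan is to collapse each double sum into a single sum with a closed form, using the $q$-Chu--Vandermonde summation, and then to reduce modulo $\Phi_n(q)^2$ (or modulo $[n]\Phi_n(q)$) term by term.

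\emph{Parts (i) and (ii).} Group the double sum for $\Phi^{(1)}$ by the total degree $k=i+j$. For fixed $k$, the inner sum over $i+j=k$ of $\frac{(q^r;q^d)_i(q^r;q^d)_j}{(q^d;q^d)_i(q^d;q^d)_j}q^{di}y^{j}$ is a terminating ${}_2\phi_1$ with argument $q^d$ (when $y=q^{d-2r}$), or argument $q^{d}y$. By the $q$-Chu--Vandermonde formula it equals
\[
\frac{(q^{2r};q^d)_k}{(q^d;q^d)_k}
\]
times a power of $q$; the choice $y=q^{d-2r}$ is exactly what makes this identity apply. The factor $(a;q^d)_{k}/(c;q^d)_{k}$ with $a=q^r$ and $c=q^d$ then combines with this to give a single sum over $k$, together with the truncation conditions $i,j\le m$. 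Note that $k$ ranges up to $2m$; the truncation $i,j\le m$ is harmless, because $(q^r;q^d)_i$ contains the factor $1-q^{r+dm}=1-q^n$ once $i>m$, so those terms vanish modulo $\Phi_n(q)$. For the square modulus, however, these terms must be tracked to first order.

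To handle this I would use creative microscoping. Replace $q^r$ by $aq^r$ in the appropriate parameters, so that $(aq^r;q^d)_{m+1}$ contains $1-aq^n$. The full sum then evaluates in closed form, either by a Pfaff--Saalschütz ${}_3\phi_2$ summation or by a second Chu--Vandermonde step. For $a=q^{\pm n}$ the sum truncates naturally at $m$. Comparing these specializations gives the congruence modulo $(1-aq^n)(a-q^n)$, and letting $a\to1$ gives the statement modulo $\Phi_n(q)^2$. In part (ii), the derivative in $a$ at $a=1$ produces the harmonic-type sum $\sum_j \bigl(q^{dj-r}/[dj-r]+q^{dj-2r}/[dj-2r]\bigr)$ and the term $(1-q^n)m$. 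These come from the logarithmic derivatives of $(aq^{d-r};q^d)_m$ and $(aq^{d-2r};q^d)_m$, together with the prefactor $q^{r(n-r)/d}$. In part (i), where $d=2$ and $r=1$, the leading factor $(q^{d-2r};q^d)_m=(1;q^2)_m$ vanishes. The sum is then exactly $[n]$ times a unit, and the modulus $[n]\Phi_n(q)$ is handled by applying the same argument with each $\Phi_{n'}(q)$, $n'\mid n$, separately.

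\emph{Part (iii).} Here the sum over $j\le n-1-i$ is again a sum over $k=i+j\le n-1$. The inner sum is the Chu--Vandermonde convolution $\frac{(q^{2r};q^d)_k}{(q^d;q^d)_k}$ with the weight $q^{di}$, so the whole expression becomes
\[
\sum_{k=0}^{n-1}\frac{(q^{d-r};q^d)_k(q^{2r};q^d)_k}{(q^d;q^d)_k^2}.
\]
This is a truncated ${}_2\phi_1$ at argument $1$, so I would apply the same microscoping. The $1/n$ term and $(q^{d-2r};q^d)_{n-1}$ come from a Gauss-type closed form evaluated beyond the natural truncation point. The $(d-1)$ term comes from counting the indices $k\in[m+1,n-1]$ at which the denominator $(q^d;q^d)_k$ acquires $\Phi_n(q)$.

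\emph{Main obstacle.} I expect the main difficulty to be part (iii), and in part (ii) the derivative expansion. In both, terms with $k>m$ have factors of $\Phi_n(q)$ in numerator and denominator that must be cancelled carefully before reducing. I would first verify numerically, for small $n$ and $d$, which closed form is correct before committing to one particular summation formula.
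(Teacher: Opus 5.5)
There is a genuine gap at the very first step. The $q$-Chu--Vandermonde formulas \eqref{identiti2}, \eqref{identiti2-1} evaluate the diagonal convolution $\sum_{i+j=k}\frac{(\alpha;q^d)_i(\beta;q^d)_j}{(q^d;q^d)_i(q^d;q^d)_j}x^{j}$ only for $x=\alpha$ or $x=\beta^{-1}$. With $\alpha=\beta=q^r$ these are the relative weights $q^{\pm rj}$. Your weights are $q^{di}q^{(d-2r)j}=q^{dk}q^{-2rj}$ in (ii) and $q^{di}=q^{dk}q^{-dj}$ in (iii), so the collapse to $(q^{2r};q^d)_k/(q^d;q^d)_k$ is false. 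For $d=2$, $r=1$, $k=1$ the diagonal sum in (iii) is $\frac{1+q^2}{1+q}$, which is not a power of $q$. For $d=3$, $r=1$, $k=1$ the diagonal sum in (ii) is $\frac{q(1+q^2)}{1+q+q^2}$, not a power of $q$ times $\frac{1+q}{1+q+q^2}$. Hence $\sum_k\frac{(q^{d-r};q^d)_k(q^{2r};q^d)_k}{(q^d;q^d)_k^2}$ is not your left-hand side, and everything downstream is built on the wrong object.

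The special value of $y$ does something else. With $(\alpha;q^d)_{i+j}$, $(b';q^d)_j$ in the numerator and $(q^d;q^d)_{i+j}$ in the denominator, the choice $y=q^d/(\alpha b')$ makes the \emph{complete} $j$-series a $q$-Gauss sum for each fixed $i$. This is the reduction formula \eqref{identiti1}, with $y=q^{d-2r}$ in (ii) and $y=q^d/(q^{d-r}q^r)=1$ in (iii). It leaves a ${}_2\phi_1$ in $i$ at argument $q^d$, which \eqref{identiti2} sums once it terminates. Your truncations ($i,j\le m$, resp.\ $i+j\le n-1$) agree with the complete series only after specialization. So the deformation must make the series terminate inside the truncation region at each specialization while keeping $y$ at the Gauss point. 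The paper divides the $(\cdot)_{i+j}$ parameter by $a$ and the two others by $b$, multiplies $y$ by $ab$, and works modulo $(a-q^n)(b-q^n)$; it then puts $b=1$ and lets $a\to1$. Your modulus $(a-q^n)(1-aq^n)$ can also work if you put $q^r/a$ in the first slot and $aq^r$ in the other two. Replacing $q^r$ by $aq^r$ throughout, as you write, terminates only at $a=q^{-n}$.

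Your account of (iii) also misplaces the source of the $(d-1)$ and $1/n$ terms. Since $\gcd(d,n)=1$, $\Phi_n(q)\mid 1-q^{dl}$ only if $n\mid l$. So $(q^d;q^d)_k$ is coprime to $\Phi_n(q)$ for every $k\le n-1$, and nothing in the denominator picks up $\Phi_n(q)$. The relevant factor is in the numerator: with $n=dm+r$, $(q^{d-r};q^d)_k$ first acquires it at $k=n-m$, namely $1-q^{(d-1)n}$. So $q^{d-r}$ must be deformed to terminate at $a=q^{(d-1)n}$, not at $q^{\pm n}$; this is Lemma~\ref{Th1proofLemma22}. That lemma also needs $2m\le n-1$, so that at $b=q^n$ the square $[0,m]^2$ lies inside $i+j\le n-1$. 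The $(d-1)$ then comes from $1-q^{(d-1)n}\equiv(d-1)(1-q^n)\pmod{\Phi_n(q)^2}$. The $1/n$ comes from rewriting $(q^{d-2r};q^d)_{n-1-m}/(q^d;q^d)_{n-1-m}$ via $(\cdot\,;q^d)_{n-1}$ and applying Lemma~\ref{Th1proofLemma1}.

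Finally, in (i) divisibility by $[n]$ is not ``the same argument for each $\Phi_{n'}(q)$''. The truncation point $\frac{n-1}{2}$ is tied to $n$, so at a primitive $v$-th root of unity $\zeta$ with $v\mid n$ you must reduce to the case $n=v$. The paper does this by splitting the sum into $v\times v$ blocks and using $c_\zeta(kv+i,lv+j)=c_\zeta(kv,lv)c_\zeta(i,j)$. You then combine with the $\Phi_n(q)^2$ result through $\mathrm{lcm}(\Phi_n(q)^2,[n])=[n]\Phi_n(q)$.
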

Indeed, \eqref{thm-q-01} and the case of \eqref{thm-q-03} with parameters $(d,r)=(2,1)$ provide two distinct forms of $q$-analogues for the congruence \eqref{Appellseries06}.
Choosing $n=p^s$ be an odd prime power and taking $q\to 1$ in Theorem \ref{thm-a}, we get the following result.

\begin{corollary}
	Let $d$ and $r$ be positive integers such that $d\geq 2, 1\leq \frac{p^s-r}{d} \leq \frac{p^s-1}{2}$  and $\gcd{(r,d)}=1$. Let $p>3$ be an odd prime and $s$ a positive integer with $p^s\equiv r \pmod{d}$. Then,
	\begin{align*}
		F_{1}[\frac{r}{d};\frac{r}{d},\frac{r}{d};1,1;1,1]_{\frac{p^s-r}{d}}&\equiv\begin{cases}\dfrac{(\frac{d-2r}{d})_{(p^s-r)/d}}{(1)_{(p^s-r)/d}}\bigg\{1-p^s\sum_{j=1}^{\frac{p^s-r}{d}}(\frac{1}{dj-r}+\frac{1}{dj-2r})\bigg\}\pmod{p^2} &\text{if }d>2,\\[5pt]
			p^s \pmod{p^{s+1}}&\text{if }d=2.
		\end{cases}\\
\sum_{i=0}^{p^s-1}\sum_{j=0}^{p^s-1-i}\frac{(\frac{d-r}{d})_{i+j}(\frac{r}{d})_{i}(\frac{r}{d})_{j}}{(1)_{i+j}(1)_i(1)_j}	&\equiv-\frac{(d-1)p^s}{2r}\frac{(1+\frac{r}{d})_{\frac{p^s-r-d}{d}}}{(1-\frac{r}{d})_{\frac{p^s-r-d}{d}}}\pmod{p^2}.
	\end{align*}

\end{corollary}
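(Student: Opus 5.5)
The corollary follows from Theorem \ref{thm-a} by setting $n=p^s$ and letting $q\to 1$. Recall that $\Phi_{p^s}(1)=p$. Hence a congruence modulo $\Phi_n(q)^2$ between expressions whose denominators are coprime to $\Phi_n(q)$ becomes a congruence modulo $p^2$ at $q=1$. A congruence modulo $[n]\Phi_n(q)$ becomes one modulo $p^s\cdot p=p^{s+1}$, because $[p^s]_q=\prod_{k=1}^{s}\Phi_{p^k}(q)$ and every factor is $p$ at $q=1$.

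\textbf{Step 1: the case $d=2$ of the first congruence.} I will let $q\to1$ in \eqref{thm-q-01}. Each factor $(q^a;q^2)_k/(q^2;q^2)_k$ tends to $(a/2)_k/(1)_k$, so the left side of \eqref{thm-q-01} tends to $F_1[\tfrac12;\tfrac12,\tfrac12;1;1,1]_{(p^s-1)/2}$. The right side tends to $p^s$. For the modulus to carry over, the denominators $(q^2;q^2)_i(q^2;q^2)_j(q^2;q^2)_{i+j}$ with $i+j\le p^s-1$ must be coprime to $\Phi_{p^k}(q)$ in the appropriate sense; more precisely, the rational numbers obtained at $q=1$ must be $p$-integral. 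I will check this by counting the powers of $p$ in numerator and denominator.

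\textbf{Step 2: the case $d>2$.} I will let $q\to1$ in \eqref{thm-q-02}. The factor $q^{r(n-r)/d}$ tends to $1$, the term $(1-q^n)\frac{n-r}{d}$ tends to $0$, and $[n]\,q^{dj-r}/[dj-r]$ tends to $p^s/(dj-r)$. The prefactor tends to $(\frac{d-2r}{d})_{(p^s-r)/d}/(1)_{(p^s-r)/d}$. This gives the first displayed congruence modulo $p^2$. The truncation in the corollary is written with parameters $1,1$ where the theorem has $\Phi^{(1)}$ with a single $c$; I read this as a harmless notational variant, with the argument $q^{d-2r}\to 1$.

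\textbf{Step 3: the second congruence.} I will let $q\to 1$ in \eqref{thm-q-03}. The term $(d-1)q^{2r}(1-q^n)/(1-q^{2r})$ tends to $(d-1)p^s/(2r)$. The prefactor $q^{-r(n-r)/d}$ tends to $1$, and the ratio of $q$-shifted factorials tends to $(1+\frac rd)_{(p^s-r-d)/d}/(1-\frac rd)_{(p^s-r-d)/d}$. It remains to show that the term $\frac1n(q^{d-2r};q^d)_{n-1}$ vanishes modulo $p^2$ at $q=1$. This reduces to proving that $\frac{1}{p^s}\prod_{k=0}^{p^s-2}(d-2r+dk)$ is divisible by $p^2$. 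The product runs over an arithmetic progression of length $p^s-1$ with difference $d$ prime to $p$. So it contains roughly $p^{s-1}$ multiples of $p$, $p^{s-2}$ multiples of $p^2$, and so on. Its $p$-adic valuation is therefore about $(p^s-1)/(p-1)$ minus a bounded correction, which exceeds $s+2$ for $p>3$, and the term vanishes.

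\textbf{Main obstacle.} The main difficulty is justifying the $q\to1$ limit: every denominator, including $\frac1n$ and the $(q^{-r+d};q^d)$ factors, must be $p$-adically controlled. For $s>1$, factors such as $\Phi_{p^k}(q)$ with $k<s$ appear in the denominators. I would handle this by comparing the multiplicities of each $\Phi_{p^k}$ in numerators and denominators termwise, as is standard in the creative microscoping literature. If that fails, I would restrict to the cancellations guaranteed by the theorem's modulus together with the $p$-integrality of the resulting rational numbers.
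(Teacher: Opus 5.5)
Your route is the paper's own: its proof of this corollary is the single sentence ``put $n=p^s$ in Theorem~\ref{thm-a} and let $q\to1$''. For $s=1$ this does give the first congruence. But Step~3 fails, and it cannot be repaired, because the second congruence is false for $d>2$.

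You misread the term $\frac1n(q^{d-2r};q^d)_{n-1}$. At $q=1$ the polynomial $(q^{d-2r};q^d)_{n-1}$ is simply $0$; your product $\prod_k(d-2r+dk)$ would require dividing by a $(1-q)^{n-1}$ that is not there. So ``it vanishes'' is trivial and beside the point.

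The real issue is the constant $\frac1n$. In proving \eqref{thm-q-03}, the paper replaces $1/(q^d;q^d)_{n-1}$ by $1/n$ via Lemma~\ref{Th1proofLemma1}. That is legitimate modulo $\Phi_n(q)^2$ over $\mathbb{Q}(q)$. But $\frac1n$ is not $p$-integral, so the resulting congruence no longer specializes at $q=1$ to one modulo $p^2$: the replaced quantity tends to a nonzero rational while its substitute tends to $0$. With $N=\frac{p^s-r}{d}$ and $K=p^s-1-N$, what would have to be $\equiv0\pmod{p^2}$ is the $q=1$ value of the term the paper rewrote into this form:
\[
q^{r(n-1-N)}\frac{(q^{d-2r};q^d)_{K}}{(q^d;q^d)_{K}}\bigg|_{q=1}=\frac{(\frac{d-2r}{d})_K}{(1)_K}.
\]
For $s=1$, write $d-2r+dk=d(k+1+2N)-2p$. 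Then exactly one numerator factor is divisible by $p$, namely $(d-2)p$. So for $d>2$ this number has $p$-adic valuation $1+v_p(d-2)$, usually $1$. Your own estimate breaks for the same reason: for $s=1$, $\frac1p\prod_{k=0}^{p-2}(d-2r+dk)$ has valuation $v_p(d-2)$, not at least $2$.

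Concretely, take $p=7$, $s=1$, $d=3$, $r=1$; all hypotheses hold and $N=2$. Since $\sum_{i+j=k}\frac{(1/3)_i(1/3)_j}{i!\,j!}=\frac{(2/3)_k}{k!}$, the left side equals $\sum_{k=0}^{6}\big(\frac{(2/3)_k}{k!}\big)^2=\frac{109321030}{3^{16}}\equiv-7\pmod{49}$. The claimed right side is $-\frac{2\cdot7}{2}\cdot\frac{(4/3)_1}{(2/3)_1}=-14$. Keeping the unreplaced term instead gives $\frac{(1/3)_4}{4!}-14=\frac{35}{243}-14\equiv-7\pmod{49}$, as it should.

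So for $s=1$ the correct statement carries the extra summand $\frac{(\frac{d-2r}{d})_K}{(1)_K}$. This summand vanishes only when $d=2$, where the factor $(d-2)p$ is $0$; that is why the case $(d,r)=(2,1)$ survives. The paper's one-line derivation has exactly the same defect.

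Separately, for $s\ge2$ your Steps~1--2 are not yet arguments. Once $i+j\ge p$, the left-hand denominators $(q^d;q^d)_{i+j}$ contain factors $\Phi_{p^k}(q)$ with $1\le k<s$, and each equals $p$ at $q=1$. So passing from the modulus $\Phi_n(q)^2$ (or $[n]\Phi_n(q)$) to $p^2$ (or $p^{s+1}$) is precisely what needs proof. Promising to compare multiplicities ``as is standard'' does not supply that proof, and the paper does not supply it either.
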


\begin{theorem}\label{thm-aaa}
	Let $n$ be a positive odd integer. Then, modulo $\Phi_{n}(q)$,

	\begin{align}\label{qconaaa1}
		&\sum_{i=0}^{\frac{n-1}{2}}\sum_{j=0}^{\frac{n-1}{2}}\frac{(q;q^{2})_{i+j}(q;q^{2})_{i}(q;q^{2})_{j}(-q;q^{2})_{j}}{(q^2;q^2)_i^2(q^2;q^2)_j^2(-q^2;q^2)_j}q^{2i+2j},\notag\\
		&\quad\equiv\begin{cases}0 &\text{if }n\equiv 3\pmod{4},\\[5pt]
			q^{3(n-1)/4}\dfrac{(q^2;q^{4})_{(n-1)/4}^2}{(q^4;q^{4})_{(n-1)/4}^2}&\text{if }n\equiv 1\pmod{4}.
		\end{cases}
	\end{align}

\end{theorem}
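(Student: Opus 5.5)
The plan is to collapse the double sum to a single sum with the $q$-Chu--Vandermonde formula, and then evaluate that single sum with a $q$-Kummer (Bailey--Daum) type summation. Throughout we work modulo $\Phi_n(q)$, so $q^n\equiv 1$, and we use the standard device of replacing a factor $(q;q^2)_k$ by $(q^{1-n};q^2)_k$. This makes every sum terminate naturally at $N=\frac{n-1}{2}$. It is legitimate because for $i>N$ the factor $(q;q^2)_i$ contains $1-q^n$, which vanishes modulo $\Phi_n(q)$. The denominators $(q^2;q^2)_i$ with $i\le N$ are coprime to $\Phi_n(q)$, so these manipulations are valid modulo $\Phi_n(q)$.

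\emph{Step 1: the inner sum over $i$.} Write $(q;q^2)_{i+j}=(q;q^2)_j\,(q^{1+2j};q^2)_i$. For fixed $j$ the sum over $i$ then becomes
\[
\sum_{i=0}^{N}\frac{(q^{1+2j};q^2)_i(q;q^2)_i}{(q^2;q^2)_i^2}q^{2i}
\equiv {}_2\phi_1\!\left[\begin{matrix}q^{-2N},\,q^{1+2j}\\ q^2\end{matrix};q^2,q^2\right]
\pmod{\Phi_n(q)},
\]
using $q\equiv q^{-2N}$. We evaluate the right-hand side by the $q$-Chu--Vandermonde formula in base $q^2$. This gives $\frac{(q^{1-2j};q^2)_N}{(q^2;q^2)_N}\,q^{(1+2j)N}$.

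\emph{Step 2: simplify this value modulo $\Phi_n(q)$.} For $j\le N$ we split
\[
(q^{1-2j};q^2)_N=(q^{1-2j};q^2)_j\,(q;q^2)_{N-j},
\qquad
(q^{1-2j};q^2)_j=(-1)^jq^{-j^2}(q;q^2)_j .
\]
We then rewrite $(q;q^2)_{N-j}$ via $q^{2k+1}\equiv q^{2k+1-n}=q^{-2(N-k)}$, which turns it into $\pm q^{(\cdot)}(q^2;q^2)_N/(q^2;q^2)_j$. The net effect is that the inner sum is congruent to $\varepsilon_j\,q^{e(j)}\frac{(q;q^2)_j}{(q^2;q^2)_j}$, where $\varepsilon_j$ is an explicit sign and $e(j)$ an explicit quadratic exponent that we will track.

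The left side of \eqref{qconaaa1} then reduces to the single sum
\[
\sum_{j=0}^{N}\varepsilon_j\,q^{e(j)+2j}\frac{(q;q^2)_j^2(-q;q^2)_j}{(q^2;q^2)_j^2(-q^2;q^2)_j}
=\sum_{j=0}^{N}\varepsilon_j\,q^{e(j)+2j}\frac{(q;q^2)_j(q^2;q^4)_j}{(q^2;q^2)_j(q^4;q^4)_j}.
\]
Here we combined $(q;q^2)_j(-q;q^2)_j=(q^2;q^4)_j$ and $(q^2;q^2)_j(-q^2;q^2)_j=(q^4;q^4)_j$.

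\emph{Step 3: evaluate the single sum.} Again replace $q$ by $q^{1-n}=q^{-2N}$ in one factor $(q;q^2)_j$. We aim to recognize the sum as a terminating Bailey--Daum ($q$-Kummer) sum in base $q^2$,
\[
{}_2\phi_1\!\left[\begin{matrix}a,\,b\\ aq^2/b\end{matrix};q^2,-q^2/b\right],
\]
with $a=q^{-2N}$ and $b$ chosen among $q,-q$ so that the sign $\varepsilon_j$ and the factor $q^{e(j)}$ are absorbed into the argument $-q^2/b$. Alternatively, we may need its companion form with $(q^2;q^4)$, $(q^4;q^4)$ parameters. The Bailey--Daum product contains a factor of the form $(aq^2;q^4)_\infty/(aq^2/b^2;q^4)_\infty$. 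For terminating $a=q^{-2N}$, this factor vanishes when $N$ is odd, i.e.\ $n\equiv 3\pmod 4$. When $N=\frac{n-1}{2}$ is even, it reduces to a ratio of finite products of length $N/2=\frac{n-1}{4}$. Using $q^n\equiv1$ once more, that ratio should become $q^{3(n-1)/4}(q^2;q^4)_{(n-1)/4}^2/(q^4;q^4)_{(n-1)/4}^2$.

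\emph{Expected obstacle.} The main difficulty is Step 3: getting the sign $\varepsilon_j$ and the quadratic power $q^{e(j)}$ from Step 2 to match exactly the argument of the Bailey--Daum sum. It may be necessary to reverse the order of summation $j\to N-j$, or to first apply a Heine-type transformation, to land on the correct ${}_2\phi_1$. The final reduction of the infinite-product evaluation to the stated finite quotient, including the exact exponent $3(n-1)/4$, will also require careful bookkeeping.
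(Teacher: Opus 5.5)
Your Steps 1 and 2 are sound and essentially match the paper's opening moves. The paper puts $q^{1-n}$ into $(q;q^2)_{i+j}$ rather than into $(q;q^2)_i$, which is equivalent modulo $\Phi_n(q)$.

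\textbf{The gap in Step 2.} The error is in assembling the single sum at the end of Step 2. The $j$-th summand is the product of three pieces: the split-off factor $(q;q^2)_j$, the outer factor $\frac{(q;q^2)_j(-q;q^2)_j}{(q^2;q^2)_j^2(-q^2;q^2)_j}q^{2j}$, and the inner value $\varepsilon_jq^{e(j)}\frac{(q;q^2)_j}{(q^2;q^2)_j}$. So it carries $(q;q^2)_j^3/(q^2;q^2)_j^3$, not the squares you wrote. If you finish your own bookkeeping, you also get $\varepsilon_jq^{e(j)}=(-1)^Nq^{-N^2+jn}\equiv(-1)^Nq^{-N^2}$. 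This is independent of $j$, so there is no $j$-dependent sign or power to absorb into an argument. The dropped factor already matters for $n=3$. There the left-hand side is $1+\frac{q^2}{(1+q)^2}+\frac{q^2}{(1+q)(1+q^2)}\equiv 1+q+q^2\equiv 0 \pmod{\Phi_3(q)}$. Your single sum is instead $-q^{-1}\bigl(1+\frac{q^2}{(1+q)(1+q^2)}\bigr)\equiv 1$.

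\textbf{Why Step 3 fails.} With the factor restored, the left-hand side is congruent modulo $\Phi_n(q)$ to
\[
(-1)^Nq^{-N^2}\sum_{j=0}^{N}\frac{(q^{1-n},q^{1-n},q,-q;q^2)_j}{(q^2,q^{2-n},-q^{2-n},q^2;q^2)_j}\,q^{2j}.
\]
This is a terminating balanced ${}_4\phi_3$ in base $q^2$, with argument $q^2$ and the parameter pattern $c,-c$ over $c^2$. It is not a Bailey--Daum ${}_2\phi_1$. In particular, that sum's argument $-q^2/b=\mp q$ for $b=\pm q$ does not match $q^{2j}$. Your proposal offers no mechanism that would turn it into one: reversing a terminating ${}_4\phi_3$ gives another ${}_4\phi_3$, and Heine's transformation acts on ${}_2\phi_1$'s. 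So Step 3 cannot be carried out as planned.

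\textbf{The missing ingredient.} What you need is Andrews' terminating $q$-analogue of Watson's sum \eqref{identiti4}, with $q\mapsto q^2$, $n\mapsto N$, $a\mapsto q^{-n}$, $c\mapsto q$; this is exactly what the paper uses. It vanishes for odd $N$, which gives the case $n\equiv 3\pmod 4$. For even $N$ it gives $q^N\frac{(q^2,q^{2-2n};q^4)_{N/2}}{(q^{4-2n},q^4;q^4)_{N/2}}\equiv q^N\frac{(q^2;q^4)_{N/2}^2}{(q^4;q^4)_{N/2}^2}$. Combined with your prefactor, the power is $q^{N-N^2}$. Since $N-N^2-\frac{3N}{2}=-\frac{N}{2}\,n$ and $q^n\equiv 1$, this equals $q^{3(n-1)/4}$, as stated.
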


Setting $n=p$ and taking $q\to 1$ in \eqref{qconaaa1}, we catch hold of a partial $q$-analogue of \eqref{Appellseries07}.

\begin{theorem}\label{thm-heliu02}
	Let $n$ be a positive odd integer. Then,  modulo $\Phi_{n}(q)^2$,
	
	\begin{align*}
			&\sum_{i=0}^{\frac{n-1}{2}}\sum_{j=0}^{\frac{n-1}{2}}\frac{(q;q^{2})_{i+j}^2}{(q^2;q^2)_i^2(q^2;q^2)_j^2(-q^2;q^2)_{i+j}^2}q^{2i+2j+2j^2},\notag\\
		&\quad\equiv\begin{cases}0 &\text{if }n\equiv 3\pmod{4},\\[5pt]
			q^{(n-1)/2}\dfrac{(q^2;q^{4})_{(n-1)/4}^2}{(q^4;q^{4})_{(n-1)/4}^2}&\text{if }n\equiv 1\pmod{4}.
		\end{cases}		
	\end{align*}

\end{theorem}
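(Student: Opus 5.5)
Our plan is to collapse the double sum to a single sum by summing over $i$ (or $j$) with $N=i+j$ fixed, and then to evaluate the resulting single sum modulo $\Phi_n(q)^2$ by creative microscoping.

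\textbf{Step 1: reduce to a single sum.} Write $N=i+j$. The summand is
\[
\frac{(q;q^2)_N^2}{(-q^2;q^2)_N^2}\cdot\frac{q^{2N+2j^2}}{(q^2;q^2)_i^2(q^2;q^2)_j^2}.
\]
Modulo $\Phi_n(q)^2$ we have $(q;q^2)_N^2\equiv 0$ whenever $N\ge (n+1)/2$, because $(q;q^2)_N$ then contains the factor $1-q^n$. This holds provided the denominators $(q^2;q^2)_i,(q^2;q^2)_j$ with $i,j\le (n-1)/2$ and $(-q^2;q^2)_N$ are coprime to $\Phi_n(q)$, which is true since $n$ is odd. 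So the square range may be replaced by the triangle $N\le (n-1)/2$. The inner sum is
\[
\sum_{j=0}^{N}\frac{q^{2j^2}}{(q^2;q^2)_{N-j}^2(q^2;q^2)_j^2}=\frac{1}{(q^2;q^2)_N^2}\sum_j \qbinom{N}{j}{q^2}^2q^{2j^2}.
\]
By the $q$-Chu--Vandermonde identity (with base $q^2$), $\sum_j\qbinom{N}{j}{}^2q^{j^2}=\qbinom{2N}{N}{}$, so this equals $(q^4;q^4)_N\big/\big((q^2;q^2)_N^2(q^2;q^2)_N\big)$ after writing $\qbinom{2N}{N}{q^2}=(q^2;q^2)_{2N}/(q^2;q^2)_N^2$ and $(q^2;q^2)_{2N}=(q^2;q^4)_N(q^4;q^4)_N$. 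Using $(q^4;q^4)_N=(q^2;q^2)_N(-q^2;q^2)_N$, the double sum becomes
\[
S:=\sum_{N=0}^{(n-1)/2}\frac{(q;q^2)_N^2(q^2;q^4)_N}{(q^2;q^2)_N^2(q^4;q^4)_N}\cdot\frac{(q^4;q^4)_N}{(-q^2;q^2)_N(q^2;q^2)_N}\,q^{2N},
\]
which simplifies, after cancellation, to a single ${}_3\phi_2$-type sum $\sum_N \frac{(q;q^2)_N^2(q^2;q^4)_N}{(q^2;q^2)_N^2(q^4;q^4)_N}(\ldots)q^{2N}$. We will do this bookkeeping carefully.

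\textbf{Step 2: creative microscoping.} Introduce a parameter $a$, replacing $(q;q^2)_N^2$ by $(aq,q/a;q^2)_N$, so that the sum terminates naturally at $N=(n-1)/2$ when $a=q^{\pm n}$. At $a=q^{-n}$ the sum is a terminating balanced or well-poised series in base $q^2$. We then evaluate it in closed form by the $q$-Pfaff--Saalschütz identity, or by Andrews' $q$-analogue of Bailey's or Gauss' ${}_2F_1(1/2)$ formula, with the factor $(q^2;q^4)_N/(q^4;q^4)_N$ suggesting the latter. That evaluation produces $(q^2;q^4)_{(n-1)/4}^2/(q^4;q^4)_{(n-1)/4}^2$ times a power of $q$ when $n\equiv1\pmod 4$, and $0$ otherwise, since for $n\equiv 3\pmod 4$ the half-length $(n-1)/2$ is odd and the Andrews-type sum vanishes.

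This gives the congruence modulo $1-aq^n$ and $a-q^n$. The result is symmetric in $a\leftrightarrow 1/a$, so we combine the two by the Chinese remainder theorem. Letting $a\to1$ then yields the congruence modulo $\Phi_n(q)^2$, with the right-hand side independent of $a$ at leading order. We will also check the $a$-dependence of the closed form: it must be $a$-invariant, or vary only in a way that vanishes to second order at $a=1$.

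\textbf{Main obstacle.} The hardest step is matching the reduced sum of Step 1 to a known terminating summation after the $a$-deformation. The leftover $(-q^2;q^2)_N$ factors must combine into $(q^4;q^4)_N$ or $(-q;q)$-type products fitting Andrews' $q$-Gauss second theorem. If no exact summation fits, we will try first to prove the $a=q^{\pm n}$ evaluation by the $q$-Zeilberger algorithm, or by a quadratic transformation reducing it to a $q$-Kummer sum.
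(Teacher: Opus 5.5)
Your Step 1 is the paper's reduction, and it is correct. One slip: the inner sum is $(q^2;q^4)_N(q^4;q^4)_N/(q^2;q^2)_N^4$, not the expression you wrote before the display, though your displayed $S$ is right. Writing $(q^2;q^4)_N=(q,-q;q^2)_N$ and $(q^4;q^4)_N=(q^2,-q^2;q^2)_N$, the bookkeeping you postpone gives
\[
S=\sum_{N=0}^{(n-1)/2}\frac{(q,q,q,-q;q^2)_N}{(q^2,q^2,q^2,-q^2;q^2)_N}\,q^{2N}.
\]
Your deformation $(q;q^2)_N^2\mapsto(aq,q/a;q^2)_N$ is also essentially the paper's step. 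The paper just uses $(q^{1-n},q^{1+n};q^2)_N\equiv(q;q^2)_N^2\pmod{\Phi_n(q)^2}$, which follows from $(1-xq^{-n})(1-xq^{n})-(1-x)^2=-xq^{-n}(1-q^n)^2$.

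The gap is Step 2, which carries the whole content of the theorem: you never evaluate the deformed sum, and none of the formulas you name applies to it. At $a=q^{-n}$ the series is the balanced
\[
{}_4\phi_3\!\left[\begin{matrix}q^{1-n},q^{1+n},q,-q\\ q^2,-q^2,q^2\end{matrix};q^2,q^2\right],
\]
with no parameter cancellation. So $q$-Pfaff--Saalsch\"utz, a ${}_3\phi_2$ sum, is not available. Andrews' $q$-analogues of Gauss's second theorem and of Bailey's theorem are ${}_2\phi_2$ evaluations with arguments $-q$ and $-c$, and they do not match without a further transformation you do not supply.

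The missing step is to recognize this series as Andrews' terminating $q$-analogue of Watson's ${}_3F_2$ sum, \eqref{identiti4}; its $q\to1$ limit is Watson's evaluation of ${}_3F_2(-m,m+1,\tfrac12;1,1;1)$. Apply \eqref{identiti4} with $q\mapsto q^2$, $n\mapsto m=(n-1)/2$, $a\mapsto1$, $c\mapsto q$. This gives $0$ when $m$ is odd, i.e.\ $n\equiv 3\pmod{4}$. When $m$ is even it gives $q^{m}(q^2,q^2;q^4)_{m/2}/(q^4,q^4;q^4)_{m/2}$, which is exactly the stated right-hand side. Since this value does not depend on $a$, the $a$-dependence and Chinese-remainder concerns at the end of your proposal are vacuous.

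As written, your proposal reduces the theorem to an evaluation it does not prove, backed only by a promise to run $q$-Zeilberger or find a quadratic transformation.
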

Choosing $n = p$ and then letting $q\to 1$ in the above theorem, we obtain the supercongruence.
\begin{corollary}\label{thm-heliu01}
	Let $p>3$ be an odd prime. Then,  modulo $p^2$,
	
	\begin{align*}
			&F_{4}[\frac{1}{2},\frac{1}{2};1,1;\frac{1}{4},\frac{1}{4}]_{\frac{p-1}{2}}\equiv\begin{cases}0 &\text{if }p\equiv 3\pmod{4},\\[5pt]
				\dfrac{(\frac{1}{2})_{(p-1)/4}^2}{(1)_{(p-1)/4}^2}&\text{if }p\equiv 1\pmod{4}.
		\end{cases}
	\end{align*}
\end{corollary}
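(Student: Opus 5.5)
The plan is to specialize Theorem \ref{thm-heliu02} to $n=p$ and let $q\to 1$; the work is checking that the $q$-sum really degenerates to the truncated $F_4$ in the corollary.

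\textbf{Step 1: the summand at $q\to1$.} In the $q$-summand we have
\[
\frac{(q;q^2)_k}{(q^2;q^2)_k}\to \frac{(\tfrac12)_k}{k!}
\qquad\text{and}\qquad
(-q^2;q^2)_{i+j}^2\to 4^{i+j}.
\]
Therefore
\[
\frac{(q;q^{2})_{i+j}^2}{(q^2;q^2)_i^2(q^2;q^2)_j^2(-q^2;q^2)_{i+j}^2}
\longrightarrow
\frac{\bigl((\tfrac12)_{i+j}\bigr)^2}{i!^2\,j!^2}\Bigl(\frac14\Bigr)^{i+j}.
\]
Here the $q$-Pochhammer for $(q;q^2)_{i+j}$ is of $(i+j)$-length and gives $2^{i+j}(\frac12)_{i+j}$ after dividing by $(1-q)^{i+j}$. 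Correspondingly, $(q^2;q^2)_i(q^2;q^2)_j$ gives $2^{i+j}i!\,j!$, so the powers of $2$ and of $(1-q)$ cancel. The factor $q^{2i+2j+2j^2}\to1$.

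Rewriting $\frac{1}{i!\,j!\,(1)_i(1)_j}$, the limit is exactly the $(i,j)$ term of $F_4[\frac12,\frac12;1,1;\frac14,\frac14]$. The summation range $0\le i,j\le \frac{p-1}{2}$ is the truncation $F_4[\cdots]_{\frac{p-1}{2}}$, understood as in the paper's definition of truncated $q$-Appell series (both indices up to the bound). On the right-hand side, $q^{(n-1)/2}\to1$ and
\[
\frac{(q^2;q^4)_{(p-1)/4}^2}{(q^4;q^4)_{(p-1)/4}^2}\to \frac{\bigl((\tfrac12)_{(p-1)/4}\bigr)^2}{\bigl((1)_{(p-1)/4}\bigr)^2},
\]
with the factors $4^{(p-1)/4}$ cancelling.

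\textbf{Step 2: passing from $\Phi_p(q)^2$ to $p^2$.} This is the only subtle point. Both sides of Theorem \ref{thm-heliu02} are rational functions in $q$ whose denominators are products of factors $1-q^{2k}$ with $k\le \frac{p-1}{2}$, and $1+q^{2k}$. For $n=p$ prime these are coprime to $\Phi_p(q)$, since $2k<p$ and $4k<2p$ with $4k\ne p$.

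So the difference of the two sides can be written as $\Phi_p(q)^2 A(q)/B(q)$, where $A,B\in\mathbb Z[q]$ and $B(q)$ is a product of cyclotomic polynomials $\Phi_m(q)$ with $m\ne p$. When $m$ is not a power of $p$, $\Phi_m(1)$ is $1$ or a prime other than $p$; the prime $2$ appears only through $m=2^a$, which is harmless since $p$ is odd. Hence $B(1)$ is coprime to $p$. Setting $q=1$ uses $\Phi_p(1)=p$, so the difference becomes $p^2A(1)/B(1)$, which is $\equiv 0\pmod{p^2}$ in $\mathbb Z_{(p)}$.

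\textbf{Step 3: verification.} The remaining work is checking the denominator-coprimality claim, in particular that no $(1-q^{2k})$ factor reaches index $p$. This holds because $i,j\le\frac{p-1}{2}$. The factor $(q;q^2)_{i+j}$ with $i+j$ up to $p-1$ sits in the numerator, so it causes no trouble. Combining Steps 1–3 gives both cases of the corollary.
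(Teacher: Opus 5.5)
Your route is the paper's: specialize Theorem \ref{thm-heliu02} to $n=p$ and let $q\to1$. Step 1 is correct. The reduction in Step 2, however, fails as written. Every factor $1-q^{2k}$ in your denominator $B(q)$ is divisible by $\Phi_1(q)=q-1$, so $B(1)=0$, not an integer prime to $p$. Your case analysis of $\Phi_m(1)$ silently omits $m=1$. As a result, ``$p^2A(1)/B(1)$'' is a $0/0$ expression: since the difference is regular at $q=1$, $A(q)$ must vanish there too, and nothing can be read off.

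The fix is to cancel the powers of $1-q$ \emph{before} specializing, exactly as you did for the limit in Step 1. Write
$(q;q^2)_k=(1-q)^k\prod_{l=0}^{k-1}[2l+1]$,
$(q^2;q^2)_k=(1-q)^k\prod_{l=1}^{k}[2l]$,
$(q^2;q^4)_k=(1-q)^k\prod_{l=0}^{k-1}[4l+2]$ and
$(q^4;q^4)_k=(1-q)^k\prod_{l=1}^{k}[4l]$.
Then every term on both sides has a denominator that is a product of $q$-integers $[2l]$ with $2l\le p-1$ and of factors $1+q^{2k}$. These are products of $\Phi_m(q)$ with $m>1$ and $m$ not a power of $p$: either $m<p$, or $4\mid m$. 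Now $B(1)$ is a nonzero integer prime to $p$. Writing the numerator of the difference as $\Phi_p(q)^2A(q)$ with $A\in\mathbb{Z}[q]$ (Gauss's lemma, $\Phi_p$ monic), evaluation at $q=1$ gives $p^2A(1)/B(1)$, as you intended. Equivalently, since each term has a finite limit at $q=1$, the difference written in lowest terms has no factor $\Phi_1(q)$ in its denominator.

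There is also a smaller slip. In $(-q^2;q^2)_{i+j}$ the index $k$ runs up to $i+j\le p-1$, not $\frac{p-1}{2}$, so your bound $4k<2p$ is wrong. Coprimality with $\Phi_p(q)$ still holds for a different reason. If $\zeta$ is a primitive $p$th root of unity, $\zeta^{2k}$ has odd order, so it is never $-1$; hence $\Phi_p(q)\nmid 1+q^{2k}$ for every $k$. With these two repairs your argument is a complete proof, and it supplies the details that the paper's one-line derivation leaves implicit.
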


Moreover, we present  provides a $q$-analogue  for \eqref{ApaZeilberger-conje-2} due to Apagodu and Zeilberger \cite[ Super-Conjecture 5']{AZ2017}.

\begin{theorem}\label{thm-ApaZeil-qcon-12}
	Let $n, r, s$ be positive integers. Then,  modulo $\Phi_{n}(q)^2$,
	\begin{align}\label{thm-ApaZeil-qcon-2}
		\sum_{i=0}^{rn-1}\sum_{j=0}^{sn-1}\frac{(q;q)_{i+j}^2}{(q;q)_i^2(q;q)_j^2}q^{i+j}\equiv\bigg(\frac{n}{3}\bigg)q^{2(n^2-1)/3}\sum_{i=0}^{r-1}\sum_{j=0}^{s-1}\frac{(q^{n^2};q^{n^2})_{i+j}^2}{(q^{n^2};q^{n^2})_i^2(q^{n^2};q^{n^2})_j^2}q^{n^2(i+j)}.
	\end{align}

\end{theorem}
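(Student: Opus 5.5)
We reduce Theorem \ref{thm-ApaZeil-qcon-12} to the case $r=s=1$, which is the He--Wang congruence \eqref{Hewang01}. Write $a_{i,j}(q)=\frac{(q;q)_{i+j}^2}{(q;q)_i^2(q;q)_j^2}q^{i+j}$, so the summand is $\qbinom{i+j}{i}{}^2 q^{i+j}$. We write $i=kn+\alpha$ and $j=ln+\beta$ with $0\le \alpha,\beta\le n-1$, $0\le k\le r-1$ and $0\le l\le s-1$. The left-hand side then becomes
\[
\sum_{k=0}^{r-1}\sum_{l=0}^{s-1}\sum_{\alpha,\beta=0}^{n-1}\qbinom{(k+l)n+\alpha+\beta}{kn+\alpha}{}^2 q^{(k+l)n+\alpha+\beta}.
\]
The plan is to show that, modulo $\Phi_n(q)^2$, each block factors approximately as $\qbinom{k+l}{k}{q^{n^2}}^2 q^{n^2(k+l)}$ times the base block $\sum_{\alpha,\beta}a_{\alpha,\beta}(q)$. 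Applying \eqref{Hewang01} to the base block and summing over $k,l$ then gives \eqref{thm-ApaZeil-qcon-2}.

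The key tool is the $q$-Lucas theorem: if $\alpha+\beta<n$, then modulo $\Phi_n(q)$ we have $\qbinom{(k+l)n+\alpha+\beta}{kn+\alpha}{}\equiv \binom{k+l}{k}\qbinom{\alpha+\beta}{\alpha}{}$. If $\alpha+\beta\ge n$, the binomial $\qbinom{\alpha+\beta}{\alpha}{}$ is divisible by $\Phi_n(q)$, so its square vanishes modulo $\Phi_n(q)^2$. Working modulo $\Phi_n(q)$ alone is insufficient here, so a refined Lucas congruence is needed. I would use the known refinement: for $\alpha+\beta<n$,
\[
\qbinom{(k+l)n+\alpha+\beta}{kn+\alpha}{}\equiv \qbinom{k+l}{k}{q^{n^2}}\qbinom{\alpha+\beta}{\alpha}{}\,(1+\Phi_n(q)\,E)\pmod{\Phi_n(q)^2},
\]
with an explicit correction $E$ obtained by writing $(q;q)_{mn+\gamma}=(q;q)_\gamma\prod_{t=1}^{m}(1-q^{tn})\cdot\prod(\dots)$ and expanding the $q^{tn}$ shifts, as in the Guo--Zeng and Pan style $q$-Lucas lemma. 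The appearance of base $q^{n^2}$ rather than $q^n$ on the right of \eqref{thm-ApaZeil-qcon-2} suggests writing $\prod_{t=1}^m (1-q^{tn})$ through $(q^n;q^n)_m$ and then comparing $(q^n;q^n)_m$ with $(q^{n^2};q^{n^2})_m$, for instance via $1-q^{tn}$ versus $1-q^{tn^2}$ modulo $\Phi_n(q)^2$ using $q^{n}\equiv 1$ up to first order.

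For the cross terms with $\alpha+\beta\ge n$, the squared binomial contributes nothing modulo $\Phi_n(q)^2$ once we check that the large binomial is still divisible by $\Phi_n(q)$. That check is a carry count in the $q$-Lucas sense: the carry in $\alpha+\beta$ forces one factor of $\Phi_n(q)$. With both pieces in place, each block equals $q^{(k+l)n}\qbinom{k+l}{k}{q^{n^2}}^2\sum_{\alpha,\beta}a_{\alpha,\beta}(q)$ plus a first-order error.

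The main obstacle is showing that the first-order error terms cancel after summing over $\alpha,\beta$. This requires controlling the derivative-type terms, essentially $\sum_{\alpha+\beta<n} a_{\alpha,\beta}(q)\cdot(\text{linear in }\alpha,\beta)$ modulo $\Phi_n(q)$, and also converting $q^{(k+l)n}$ into $q^{n^2(k+l)}$. I would first try to show these auxiliary sums vanish modulo $\Phi_n(q)$ by symmetry $\alpha\leftrightarrow n-1-\alpha$, using $q^n\equiv1$. If symmetry does not suffice, the fallback is creative microscoping: replace $q^n$ by a parameter $a$, prove the identity modulo $(1-aq^{n})$ and $(a-q^n)$ separately via the $q$-Chu--Vandermonde summation as He and Wang do, and then let $a\to1$. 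Finally, we multiply by the constant $\left(\frac{n}{3}\right)q^{2(n^2-1)/3}$ obtained from \eqref{Hewang01} and sum over $k$ and $l$.
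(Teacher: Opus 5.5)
Your outline matches the paper's architecture. You split $i=kn+\alpha$, $j=ln+\beta$ and drop the terms with $\alpha+\beta\ge n$ (the large binomial then carries exactly one factor $\Phi_n(q)$, so its square vanishes). You expand $(q^{sn+1};q)_t/(q;q)_t\equiv 1-s(q^n-1)\Lambda_t(q)\pmod{\Phi_n(q)^2}$ with $\Lambda_t(q)=\sum_{j=1}^t q^j/(1-q^j)$, replace $\qbinom{An}{an}{q}$ by $\qbinom{A}{a}{q^{n^2}}$ (Straub's lemma), and close with \eqref{Hewang01}. The gap is that the step you call ``the main obstacle'' is where the real content lies, and you do not prove it.

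After the expansion, and after converting $q^{An}\equiv 1+A(q^n-1)$ into $q^{n^2A}\equiv 1+nA(q^n-1)$, the block congruence with $A=k+l$ is equivalent to $A\,F(q)\equiv 0\pmod{\Phi_n(q)}$, where
\[
F(q)=\sum_{m=0}^{n-1}q^m\sum_{b=0}^{m}\qbinom{m}{b}{q}^{2}\bigl(2\Lambda_b(q)-2\Lambda_m(q)-(n-1)\bigr).
\]
So once $r$ or $s$ exceeds $1$, you need $F(\zeta)=0$ at every primitive $n$-th root of unity $\zeta$. The weights here are $q$-harmonic sums, not ``linear in $\alpha,\beta$'' as you describe them. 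Similarly, comparing $(q^n;q^n)_m$ with $(q^{n^2};q^{n^2})_m$ alone does not produce the base $q^{n^2}$, since $\qbinom{A}{a}{q^n}\not\equiv\qbinom{A}{a}{q^{n^2}}\pmod{\Phi_n(q)^2}$. The blocks of non-multiples of $n$ supply the missing factor via $\Lambda_{n-1}(\zeta)=-\frac{n-1}{2}$; citing Straub settles that point.

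Neither of your suggested routes establishes $F(\zeta)=0$ as stated.
\begin{itemize}
\item The reflection $\alpha\leftrightarrow n-1-\alpha$ does not even preserve the index set $\alpha+\beta<n$.
\item The reflection the paper uses is $(\alpha,\beta)\mapsto(\alpha,n-1-\alpha-\beta)$, i.e.\ $t=n-1-m+b$. It is combined with $\zeta\mapsto\zeta^{-1}$ and the relations $\Lambda_s(\zeta^{-1})=-s-\Lambda_s(\zeta)$ and $\Lambda_{n-1-u}(\zeta)=\Lambda_u(\zeta)+u-\frac{n-1}{2}$.
\item Even this is not a cancellation argument by itself. It only rewrites $F(\zeta)$ as $-2\sum_t\zeta^{-t-1}$ times the left-hand sides of
\[
\sum_{b=0}^{m}q^{b^2}\qbinom{m}{b}{q}^{2}\bigl(b+\Lambda_b(q)-\Lambda_{m-b}(q)\bigr)=0
\]
evaluated at $q=\zeta^{-1}$, $m=t$.
\end{itemize}
That exact identity is the missing idea. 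The paper obtains it by differentiating at $x=1$ the parametrized $q$-Chu--Vandermonde evaluation $\sum_{b}x^bq^{b^2}\qbinom{m}{b}{q}\frac{(x^{-1}q^{m-b+1};q)_b}{(xq;q)_b}=\frac{(q^{m+1};q)_m}{(xq;q)_m}$, then subtracting the $x=1$ case $\sum_b q^{b^2}\qbinom{m}{b}{q}^2=\qbinom{2m}{m}{q}$. Your fallback does not fill the gap either. The paper states that creative microscoping was attempted for this theorem and failed, because one could not show the numerator carries enough powers of $\Phi_n(q)$.
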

Moreover, If $r=s$, \eqref{thm-ApaZeil-qcon-2} reduces to the following $q$-supercongruence related to truncated $q$-Appell series $\Phi^{(4)}$:
\begin{align}\label{thm-ApaZeil-qcon-111}
	\Phi^{(4)}[q,q;q,q;q;q,q]_{rn-1}	\equiv \bigg(\frac{n}{3}\bigg)q^{2(n^2-1)/3}\Phi^{(4)}[q,q;q,q;q;q,q]_{r-1}\pmod{\Phi_{n}(q)^2}.
\end{align} 
Setting $n=p\geq5$ be an odd prime and taking $q\to 1$ in \eqref{thm-ApaZeil-qcon-111}, we catch hold of the formula: 
\begin{align*}
	F_{4}[1,1;1,1;1,1]_{rp-1}\equiv\bigg(\frac{p}{3}\bigg)	F_{4}[1,1;1,1;1,1]_{r-1}\pmod{p^2}.
\end{align*}

\subsection{Outline of this paper}
Our subsequent exposition is organized as follows. In the following sections, we will employ the `creative microscoping' method developed by Guo and Zudilin \cite{GuoZu-a}, the Chinese remainder theorem, and some classical  hypergeometric transformation formulas. In particular, in the course of proving Theorem \ref{thm-ApaZeil-qcon-12}, we employ the $q$-differential operator and the properties of $q$-roots of unity to establish several crucial identities.

 In accordance with Gasper and Rahman's  conventions, we define the basic hypergeometric series \cite{GR} by:	
\begin{equation*}
	{}_{r+1}\phi_r\!\left[\begin{matrix}
		a_1,a_2,\dots,a_{r+1}\\b_1,\dots,b_r
	\end{matrix};q,z\right]=\sum_{k=0}^\infty
	\frac{(a_1,a_2,\dots,a_{r+1};q)_k}{(q,b_1,\dots,b_r;q)_k}
	z^k.
\end{equation*}	
To prove Theorem \ref{thm-a}, we will use a reduction formula \cite[(10.3.7)]{GR}:
\begin{align}\label{identiti1}
	&\Phi^{(1)}[q^a;q^b,q^{b'};q^c;q;x,q^{c-a-b'}]=\frac{(q^{c-b'},q^{c-a};q)_{\infty}}{(q^{c},q^{c-a-b'};q)_{\infty}}{}_{2}\phi_{1}\!\left[\begin{matrix}
		q^a,q^b\\q^{c-b'}	\end{matrix};q,x\right].
\end{align}	
Furthermore, we will make repeated use of the $q$-Vandermonde ($q$-Chu-Vandermonde) summation formulae \cite[Appendix (II. 6), (II. 7)]{GR}:
\begin{align}
	{}_{2}\phi_{1}\!\left[\begin{matrix}
		a,q^{-n}\\c	\end{matrix};q,q\right]&=\frac{(c/a;q)_n}{(c;q)_n}a^n,\label{identiti2}\\
	{}_{2}\phi_{1}\!\left[\begin{matrix}
			a,q^{-n}\\c	\end{matrix};q,cq^n/a\right]&= \frac{(c/a; q)_n}{(c; q)_n}, \label{identiti2-1}
\end{align}	
and another form of the $q$-Chu-Vandermonde identity \cite[(3.3.10)]{An1985}:
\begin{align}\label{identiti3}
\qbinom{m+n}{k}{q}=\sum_{j=0}^{k}q^{(m-j)(k-j)}\qbinom{m}{j}{q}\qbinom{n}{k-j}{q}.
\end{align}	
Moreover, in the proof of Theorems \ref{thm-aaa} and \ref{thm-heliu02}, we will employ Andrews' terminating $q$-analogue of Watson's ${}_{3}F_2$ sum \cite[Appendix (II. 17)]{GR}: 
\begin{equation}\label{identiti4}
	{}_{4}\phi_3\!\left[\begin{matrix}
		q^{-n},a^2q^{n+1},c,-c\\aq,-aq,c^2
	\end{matrix};q,q\right]=\begin{cases}0 &\text{if $n$ is odd},\\[5pt]
	\dfrac{c^n(q,a^2q^2/c^2;q^2)_{n/2}}{(a^2q^2,c^2q;q^2)_{n/2}}&\text{if $n$ is even}.
	\end{cases}
\end{equation}

\section{Proof of Theorem \ref{thm-a}}
	\setcounter{lemma}{0}
\setcounter{theorem}{0}
\setcounter{equation}{0}
\setcounter{conjecture}{0}
\setcounter{remark}{0}
\setcounter{corollary}{0}

Before the proof of Theorem \ref{thm-a}, we need to present the following lemmas by employing the method of `creative microscoping' and the $q$-Vandermonde summation formula \eqref{identiti2}.

\begin{lemma}\label{Th1proofLemma11}
	Let $d$ and $r$ be integers such that $d\geq 2$ and $\gcd{(r,d)}=1$. Let $n$ be a positive integer satisfying $n\equiv r \pmod{d}$ and $1\leq \frac{n-r}{d} \leq \frac{n-1}{2}$. Then, modulo $(a-q^n)(b-q^n)$,
	\begin{align}\label{qlemma11cong11111}
		&\Phi^{(1)}[q^r/a;q^r/b,q^r/b;q^d;q^d;q^d,abq^{d-2r}]_{\frac{n-r}{d}}\notag\\
	&\quad\equiv \frac{(b^2q^{d-2r};q^d)_{\frac{n-r}{d}}}{(q^d;q^d)_{\frac{n-r}{d}}}(q^r/b)^{\frac{n-r}{d}}\times \frac{b-q^n}{b-a}\notag\\
	&\quad+\frac{(aq^{d-r};q^d)_{\frac{n-r}{d}}(abq^{d-2r};q^d)_{\frac{n-r}{d}}}{(q^d;q^d)_{\frac{n-r}{d}}(bq^{d-r};q^d)_{\frac{n-r}{d}}}(q^r/a)^{\frac{n-r}{d}}\times \frac{a-q^n}{a-b}.
	\end{align}

\end{lemma}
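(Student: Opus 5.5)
The plan is the usual `creative microscoping' argument. The polynomials $a-q^n$ and $b-q^n$ are coprime in $\mathbb{Q}(q)[a,b]$. By the Chinese remainder theorem it is therefore enough to prove \eqref{qlemma11cong11111} modulo $a-q^n$ and modulo $b-q^n$ separately. In fact I will show that both sides agree exactly after each specialization. On the right-hand side, the specialization $a=q^n$ kills the second term and turns the factor $\frac{b-q^n}{b-a}$ into $1$. Symmetrically, $b=q^n$ kills the first term and turns $\frac{a-q^n}{a-b}$ into $1$. Throughout write $N=(n-r)/d$, so that $q^{r-n}=q^{-dN}$ and $n+d-2r=d(N+1)-r$.

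\emph{Case $a=q^n$.} The first parameter becomes $q^r/a=q^{-dN}$, so $(q^{-dN};q^d)_{i+j}=0$ whenever $i+j>N$. Every term with $i+j\le N$ has $i,j\le N$ and so lies inside the truncation. Hence the truncated sum equals the full (terminating) series $\Phi^{(1)}$ with base $q^d$.

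The second variable is $abq^{d-2r}=bq^{n+d-2r}$. With $A=q^{-dN}$, $B=B'=q^r/b$, $C=q^d$, this is exactly $C/(AB')$. So the reduction formula \eqref{identiti1} (with $q\to q^d$) applies, and since everything terminates it holds as a rational-function identity in $b$. The infinite-product prefactor collapses to
$$\frac{(bq^{d-r};q^d)_N}{(q^d;q^d)_N}.$$
The remaining series is ${}_2\phi_1[q^{-dN},q^r/b;bq^{d-r};q^d,q^d]$. By the $q$-Vandermonde formula \eqref{identiti2} it equals
$$\frac{(b^2q^{d-2r};q^d)_N}{(bq^{d-r};q^d)_N}\,(q^r/b)^N.$$
Multiplying by the prefactor gives precisely the first term of the right-hand side.

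\emph{Case $b=q^n$.} Now $B=B'=q^{-dN}$, so $(q^{-dN};q^d)_i$ and $(q^{-dN};q^d)_j$ vanish for $i>N$ or $j>N$. Again the truncation equals the full series. The second variable is $aq^{n+d-2r}$, which equals $C/(AB')$ with $A=q^r/a$, $B'=q^{-dN}$, $C=q^d$. Applying \eqref{identiti1}, the prefactor becomes
$$\frac{(aq^{d-r};q^d)_N}{(q^d;q^d)_N}.$$
The series becomes ${}_2\phi_1[q^r/a,q^{-dN};q^{d+n-r};q^d,q^d]$. By \eqref{identiti2} it evaluates to
$$\frac{(aq^{d+n-2r};q^d)_N}{(q^{d+n-r};q^d)_N}\,(q^r/a)^N.$$
At $b=q^n$ this is exactly $\frac{(abq^{d-2r};q^d)_N}{(bq^{d-r};q^d)_N}(q^r/a)^N$, so the second term of the right-hand side is recovered.

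\emph{Main obstacle.} The delicate point is justifying the reduction formula \eqref{identiti1}, stated for infinite series, in these terminating specializations. I would handle it either by noting that both sides are polynomial/rational identities obtained by analytic continuation, or more safely by redoing the derivation directly: sum over $j$ first using \eqref{identiti2-1}, then collect terms in $i$.

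One must also check that no denominator vanishes at the specializations, in particular $(q^d;q^d)_{i+j}$ with $i+j\le 2N\le n-1$. The hypothesis $N\le (n-1)/2$ guarantees this, and this is exactly where that hypothesis is used.
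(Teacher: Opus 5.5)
Your proposal is correct and follows the paper's proof: the Chinese remainder theorem over $a-q^n$ and $b-q^n$, then in each specialization the reduction formula \eqref{identiti1} with $q\mapsto q^d$ followed by $q$-Vandermonde \eqref{identiti2}; your fallback of summing over $j$ first with \eqref{identiti2-1} is a sound justification for using \eqref{identiti1} in the terminating case, which the paper does not give. One small correction: the denominators $(q^d;q^d)_{i+j}$ are nonzero polynomials in $q$ that do not involve $a$ or $b$, so this lemma does not actually need $\frac{n-r}{d}\le\frac{n-1}{2}$; that hypothesis is only used later, to make the denominators coprime to $\Phi_n(q)$.
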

\begin{proof}
	Firstly, for $a=q^n$, letting $q\mapsto q^d$, and taking $q^a\mapsto q^{r-n}$, $q^b\mapsto q^r/b$, $q^{b'}\mapsto q^r/b$, $q^{c}\mapsto q^d$, $x\mapsto q^d$ in \eqref{identiti1}, we can get that
	\begin{align*}
		&\Phi^{(1)}[q^{r-n};q^r/b,q^r/b;q^d;q^d;q^d,bq^{d-2r+n}]_{\frac{n-r}{d}}\notag\\
		&\quad=\frac{(bq^{d-r},q^{d-r+n};q^{d})_\infty}{(q^d,bq^{d-2r+n};q^{d})_\infty}\sum_{k=0}^{\infty}\frac{(q^{r-n},q^r/b;q^d)_k}{(bq^{d-r},q^d;q^d)_k}q^{dk}\notag\\
		&\quad=\frac{(b^2q^{d-2r};q^d)_{\frac{n-r}{d}}}{(q^d;q^{d})_\frac{n-r}{d}}(q^r/b)^{\frac{n-r}{d}},
	\end{align*}
	where the last identity follows from $q\mapsto q^d$, $n\mapsto \frac{n-r}{d}$, $a\mapsto q^r/b$, $c\mapsto bq^{d-r}$ in \eqref{identiti2}. 
Thus, the $q$-congruence \eqref{qlemma11cong11111} modulo $a-q^n$ is established.

	Secondly, for $b=q^n$, 	performing the replacements $q\mapsto q^d$, and taking $q^a\mapsto q^{r}/a$, $q^b\mapsto q^{r-n}$, $q^{b'}\mapsto q^{r-n}$, $q^{c}\mapsto q^d$, $x\mapsto q^d$ in \eqref{identiti1}, we obtain 
	\begin{align*}
	&\Phi^{(1)}[q^r/a;q^{r-n},q^{r-n};q^d;q^d;q^d,aq^{d-2r+n}]_{\frac{n-r}{d}}\notag\\
	&\quad=\frac{(q^{d-r+n},aq^{d-r};q^{d})_\infty}{(q^d,aq^{d-2r+n};q^{d})_\infty}\sum_{k=0}^{\infty}\frac{(q^{r}/a,q^{r-n};q^d)_k}{(q^{d-r+n},q^d;q^d)_k}q^{dk}\notag\\
	&\quad=\frac{(aq^{d-r};q^d)_{\frac{n-r}{d}}(aq^{d-2r+n};q^d)_{\frac{n-r}{d}}}{(q^d;q^{d})_\frac{n-r}{d}(q^{d-r+n};q^d)_{\frac{n-r}{d}}}(q^r/a)^{\frac{n-r}{d}}.
\end{align*}	
	The last identity follows from $q\mapsto q^d$, $n\mapsto \frac{n-r}{d}$, $a\mapsto q^r/a$, $c\mapsto q^{d-r+n}$ in \eqref{identiti2}. In other words, the $q$-congruence \eqref{qlemma11cong11111} is valid modulo	$b-q^n$.

Finally, since $a-q^n$ and $b-q^n$ are coprime polynomials, by applying the Chinese remainder theorem and the following formula:
	\begin{align*}
		\frac{b-q^n}{b-a}\equiv 1\pmod{a-q^n},\quad\quad\quad\frac{a-q^n}{a-b}\equiv 1\pmod{b-q^n}, 
\end{align*}
we obtain the $q$-congruence \eqref{qlemma11cong11111}.		
\end{proof}
\begin{lemma}\label{Th1proofLemma22}
With the same assumptions as Lemma \ref{Th1proofLemma11}, we have, modulo $(a-q^{(d-1)n})(b-q^n)$,
\begin{align}\label{qcona1}
	&\sum_{i=0}^{n-1}\sum_{j=0}^{n-1-i}\frac{(q^{d-r}/a;q^{d})_{i+j}(q^{r}/b;q^{d})_{i}(q^{r}/b;q^{d})_{j}}{(q^d;q^d)_{i+j}(q^d;q^d)_i(q^d;q^d)_j}q^{di}(ab)^j\notag\\
	&\quad\equiv (q^r/b)^{n-1-\frac{n-r}{d}}\frac{(b^2q^{d-2r};q^d)_{n-1-\frac{n-r}{d}}}{(q^{d};q^d)_{n-1-\frac{n-r}{d}}}\frac{b^{d-1}-q^{(d-1)n}}{b^{d-1}-a}\notag\\
		&\quad+(q^{d-r}/a)^{\frac{n-r}{d}}\frac{(aq^{r};q^d)_{\frac{n-r}{d}}(ab;q^d)_{\frac{n-r}{d}}}{(q^{d};q^d)_{\frac{n-r}{d}}(bq^{d-r};q^d)_{\frac{n-r}{d}}}\frac{a-q^{(d-1)n}}{a-b^{d-1}}.
\end{align}

\end{lemma}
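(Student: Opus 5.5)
The plan is the same as for Lemma \ref{Th1proofLemma11}. We evaluate the left-hand side of \eqref{qcona1} exactly at $a=q^{(d-1)n}$ and at $b=q^n$, and then glue the two evaluations with the Chinese remainder theorem. The observation that makes this work is that the full (untruncated) double sum
\[
S(a,b)=\sum_{i,j\ge 0}\frac{(q^{d-r}/a;q^{d})_{i+j}(q^{r}/b;q^{d})_{i}(q^{r}/b;q^{d})_{j}}{(q^d;q^d)_{i+j}(q^d;q^d)_i(q^d;q^d)_j}q^{di}(ab)^j
\]
is exactly $\Phi^{(1)}[q^{d-r}/a;q^r/b,q^r/b;q^d;q^d;q^d,ab]$. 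Indeed, in \eqref{identiti1} (with $q\mapsto q^d$, $q^a\mapsto q^{d-r}/a$, $q^b=q^{b'}\mapsto q^r/b$, $q^c\mapsto q^d$) the required second variable is $q^{c-a-b'}=q^d\cdot aq^{r-d}\cdot bq^{-r}=ab$. So the reduction formula applies and gives
\[
S(a,b)=\frac{(bq^{d-r},aq^{r};q^d)_\infty}{(q^d,ab;q^d)_\infty}\,{}_2\phi_1\!\left[\begin{matrix}q^{d-r}/a,\ q^r/b\\ bq^{d-r}\end{matrix};q^d,q^d\right].
\]

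\emph{Case $a=q^{(d-1)n}$.} Here $q^{d-r}/a=q^{-dm}$ with $m=n-1-\frac{n-r}{d}$, since $d-r-(d-1)n=-d\bigl(n-1-\frac{n-r}{d}\bigr)$. Hence $(q^{d-r}/a;q^d)_{i+j}$ vanishes once $i+j>m$. Because $m\le n-1$, the truncation $i+j\le n-1$ in \eqref{qcona1} loses nothing, and the truncated sum equals $S(q^{(d-1)n},b)$. I then evaluate the terminating ${}_2\phi_1$ by the $q$-Vandermonde formula \eqref{identiti2}, with $q\mapsto q^d$, $n\mapsto m$, $a\mapsto q^r/b$, $c\mapsto bq^{d-r}$, which gives $\frac{(b^2q^{d-2r};q^d)_m}{(bq^{d-r};q^d)_m}(q^r/b)^m$. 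Next I collapse the infinite-product prefactor into finite products, using $aq^r=q^{(d-1)n+r}$ and $ab=bq^{(d-1)n}$ together with $(d-1)n+r\equiv 0$ and $(d-1)n\equiv d-r \pmod d$. I expect the factors $(bq^{d-r};q^d)_m$ to cancel, which should leave exactly the first term on the right of \eqref{qcona1}; there $\frac{b^{d-1}-q^{(d-1)n}}{b^{d-1}-a}=1$.

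\emph{Case $b=q^n$.} Now $q^r/b=q^{r-n}=q^{-d\ell}$ with $\ell=\frac{n-r}{d}$, so only $i,j\le \ell$ contribute. The hypothesis $\ell\le\frac{n-1}{2}$ gives $i+j\le 2\ell\le n-1$, so again the truncation is harmless. Apply \eqref{identiti2} with $a\mapsto q^{d-r}/a$, $c\mapsto q^{n+d-r}$, $n\mapsto \ell$. Then rewrite $(q^{n+d-r};q^d)_\infty/(aq^n;q^d)_\infty\cdot (aq^r;q^d)_\infty/(q^d;q^d)_\infty$ as a finite ratio, using $(aq^r;q^d)_\infty/(aq^n;q^d)_\infty=(aq^r;q^d)_\ell$, and simplify. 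The target is the second term, with $(aq^{r};q^d)_\ell(ab;q^d)_\ell/\bigl((q^d;q^d)_\ell(bq^{d-r};q^d)_\ell\bigr)$ evaluated at $b=q^n$, where $\frac{a-q^{(d-1)n}}{a-b^{d-1}}=1$ because $b^{d-1}=q^{(d-1)n}$.

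Finally, $a-q^{(d-1)n}$ and $b-q^n$ are coprime in $\mathbb{Z}[q,a,b]$. At $a=q^{(d-1)n}$ the second term of \eqref{qcona1} vanishes and the first term's fraction equals $1$; at $b=q^n$ the roles reverse. So the Chinese remainder theorem yields \eqref{qcona1}.

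The main obstacle is the bookkeeping in converting the infinite-product prefactor of \eqref{identiti1} into finite products. One must check that no factor of a numerator or denominator vanishes at the specialized values, and that the exponents align modulo $d$ so that the ratios telescope; the power of $q$ from \eqref{identiti2} must also come out as the stated $(q^{d-r}/a)^{\ell}$ or $(q^r/b)^{m}$. I would handle this by first writing every infinite ratio as $(x;q^d)_\infty/(xq^{dk};q^d)_\infty=(x;q^d)_k$ for the appropriate $k$, and only then substituting.
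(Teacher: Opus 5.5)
Your proposal is correct and follows essentially the same route as the paper: specialize at $a=q^{(d-1)n}$ and at $b=q^n$, where the double sum terminates inside the truncation range, evaluate it with the reduction formula \eqref{identiti1} followed by the $q$-Vandermonde formula \eqref{identiti2}, and glue the two evaluations with the Chinese remainder theorem. Your reason why the truncation is harmless when $b=q^n$ (only $i,j\le\ell$ survive, so $i+j\le 2\ell\le n-1$) is actually stated more accurately than the paper's corresponding remark.
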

\begin{proof}
Firstly, for $a=q^{(d-1)n}$, letting $q\mapsto q^d$, and taking $q^a\mapsto q^{d-r-(d-1)n}$, $q^b\mapsto q^r/b$, $q^{b'}\mapsto q^r/b$, $q^{c}\mapsto q^d$, $x\mapsto q^d$ in \eqref{identiti1}, we can get that
\begin{align*}
	&\sum_{i=0}^{n-1-\frac{n-r}{d}}\sum_{j=0}^{n-1-\frac{n-r}{d}-i}\frac{(q^{d-r-(d-1)n};q^{d})_{i+j}(q^{r}/b;q^{d})_{i}(q^{r}/b;q^{d})_{j}}{(q^d;q^d)_{i+j}(q^d;q^d)_i(q^d;q^d)_j}q^{di}(bq^{(d-1)n})^j\notag\\
	&\quad=\frac{(bq^{d-r},q^{r+(d-1)n};q^{d})_\infty}{(q^d,bq^{(d-1)n};q^{d})_\infty}\sum_{k=0}^{\infty}\frac{(q^{d-r-(d-1)n},q^r/b;q^d)_k}{(bq^{d-r},q^d;q^d)_k}q^{dk}\notag\\
	&\quad=\frac{(b^2q^{d-2r};q^d)_{n-1-\frac{n-r}{d}}}{(q^d;q^{d})_{n-1-\frac{n-r}{d}}}(q^r/b)^{n-1-\frac{n-r}{d}},
\end{align*}
where the last identity follows from $q\mapsto q^d$, $n\mapsto n-1-\frac{n-r}{d}$, $a\mapsto q^r/b$, $c\mapsto bq^{d-r}$ in \eqref{identiti2}. Moreover, since $(q^{d-r-(d-1)n};q^{d})_{i+j}=0$ when $n-1-\frac{n-r}{d}<i+j\leq n-1$, we can conclude that the $q$-congruence \eqref{qcona1} modulo $a-q^{(d-1)n}$ holds.

	Secondly, for $b=q^n$, 	performing the replacements $q\mapsto q^d$, and taking $q^a\mapsto q^{d-r}/a$, $q^b\mapsto q^{r-n}$, $q^{b'}\mapsto q^{r-n}$, $q^{c}\mapsto q^d$, $x\mapsto q^d$ in \eqref{identiti1}, we obtain 
\begin{align*}
	&\sum_{i=0}^{\frac{n-r}{d}}\sum_{j=0}^{\frac{n-r}{d}}\frac{(q^{d-r}/a;q^{d})_{i+j}(q^{r-n};q^{d})_{i}(q^{r-n};q^{d})_{j}}{(q^d;q^d)_{i+j}(q^d;q^d)_i(q^d;q^d)_j}q^{di}(aq^{n})^j\notag\\
	&\quad=\frac{(q^{d-r+n},aq^{r};q^{d})_\infty}{(q^d,aq^{n};q^{d})_\infty}\sum_{k=0}^{\infty}\frac{(q^{d-r}/a,q^{r-n};q^d)_k}{(q^{d-r+n},q^d;q^d)_k}q^{dk}\notag\\
	&\quad=\frac{(aq^{r};q^d)_{\frac{n-r}{d}}(aq^n;q^d)_{\frac{n-r}{d}}}{(q^d;q^{d})_\frac{n-r}{d}(q^{d-r+n};q^d)_{\frac{n-r}{d}}}(q^{d-r}/a)^{\frac{n-r}{d}}.
\end{align*}	
The last identity follows from $q\mapsto q^d$, $n\mapsto \frac{n-r}{d}$, $a\mapsto q^{d-r}/a$, $c\mapsto q^{d-r+n}$ in \eqref{identiti2}. In other words, the $q$-congruence \eqref{qcona1} is valid modulo	$b-q^n$ by applying the fact that $(q^{r-n};q^{d})_{i}=0$ or $(q^{r-n};q^{d})_{j}=0$ for $ \frac{2(n-r)}{d}\leq i+j \leq n-1$.

Finally, since $a-q^{(d-1)n}$ are coprime with $b-q^n$, by applying the Chinese remainder theorem and the following formula:
\begin{align*}
	\frac{b^{d-1}-q^{(d-1)n}}{b^{d-1}-a}\equiv 1\pmod{a-q^{(d-1)n}},\quad\quad\quad\frac{a-q^{(d-1)n}}{a-b^{d-1}}\equiv 1\pmod{b-q^n}, 
\end{align*}
we obtain the $q$-congruence \eqref{qcona1}.

\end{proof}
Finally, the following lemma shall also be necessary. For a short proof of it, we refer the reader to \cite[Lemma 2.1]{GuoZhu2020}.
\begin{lemma}\label{Th1proofLemma1}
	Let $n, d$ be  positive integers with $\gcd{(n,d)}=1$. Then,
	\begin{align*}
		(q^d;q^d)_{n-1}&\equiv n\pmod{\Phi_{n}(q)}.
	\end{align*}
\end{lemma}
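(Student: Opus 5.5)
The statement to prove is Lemma \ref{Th1proofLemma1}: $(q^d;q^d)_{n-1}\equiv n\pmod{\Phi_n(q)}$ when $\gcd(n,d)=1$. The plan is to reduce to the classical identity $\prod_{k=1}^{n-1}(1-\zeta^k)=n$ for a primitive $n$-th root of unity $\zeta$. Since $\Phi_n(q)$ is irreducible over $\mathbb{Q}$ and $(q^d;q^d)_{n-1}-n$ has integer coefficients, it suffices to check that this polynomial vanishes at every primitive $n$-th root of unity $\zeta$. Equivalently, we show $\prod_{k=1}^{n-1}(1-\zeta^{dk})=n$.

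\textbf{Step 1: permute the exponents.} Because $\gcd(d,n)=1$, multiplication by $d$ permutes the nonzero residues modulo $n$. Hence the multiset $\{\zeta^{dk}:1\le k\le n-1\}$ coincides with $\{\zeta^{k}:1\le k\le n-1\}$, and therefore
\[
\prod_{k=1}^{n-1}(1-\zeta^{dk})=\prod_{k=1}^{n-1}(1-\zeta^k).
\]

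\textbf{Step 2: evaluate the product.} The numbers $\zeta^k$ for $1\le k\le n-1$ are exactly the roots of
\[
\frac{x^n-1}{x-1}=1+x+\cdots+x^{n-1},
\]
since $\zeta$ is primitive. Thus $\prod_{k=1}^{n-1}(x-\zeta^k)=1+x+\cdots+x^{n-1}$, and evaluating at $x=1$ gives the value $n$.

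\textbf{Step 3: conclude.} Step 2 shows that every root of $\Phi_n(q)$ is a root of $(q^d;q^d)_{n-1}-n$. Since $\Phi_n(q)$ is separable and monic with integer coefficients, $\Phi_n(q)$ divides $(q^d;q^d)_{n-1}-n$ in $\mathbb{Z}[q]$ by Gauss's lemma. This is the required congruence.

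The only point needing care is Step 1, the use of coprimality: if $\gcd(n,d)>1$, some $dk$ would be divisible by $n$, a factor $1-\zeta^{dk}$ would vanish, and the product would be $0$. Everything else in the argument is standard.
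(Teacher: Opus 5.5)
Your proof is correct and is essentially the standard argument. The paper gives no proof of its own and defers to Lemma 2.1 of Guo and Zhu, which argues the same way as you: coprimality makes $k\mapsto dk \bmod n$ permute the nonzero residues, so $(q^d;q^d)_{n-1}\equiv (q;q)_{n-1}\pmod{\Phi_n(q)}$, and this is $n$ by $\prod_{k=1}^{n-1}(1-\zeta^k)=n$.
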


\begin{proof}[Proof of \eqref{thm-q-01}]
	  Letting  $b=1$ in  the $(d,r)=(2,1)$ case of \eqref{qlemma11cong11111}, we obtain the following $q$-congruence: modulo $\Phi_{n}(q)(a-q^n)$,
\begin{align}\label{thm-acong1}
	\Phi^{(1)}[q/a;q,q;q^2;q^2;q^2,a]_{\frac{n-1}{2}}&\equiv \frac{(aq;q^2)_{\frac{n-1}{2}}(a;q^2)_{\frac{n-1}{2}}}{(q^2;q^2)_{\frac{n-1}{2}}(q;q^2)_{\frac{n-1}{2}}}(q/a)^{\frac{n-1}{2}}\times \frac{a-q^n}{a-1}\notag\\
	&=-(a-q^n)\frac{(aq;q^2)_{\frac{n-1}{2}}(aq^2;q^2)_{\frac{n-3}{2}}}{(q^2;q^2)_{\frac{n-1}{2}}(q;q^2)_{\frac{n-1}{2}}}(q/a)^{\frac{n-1}{2}}.
\end{align}

	Furthermore, taking $a=1$ in \eqref{thm-acong1} and applying the following $q$-congruence: 
\begin{align*}
-q^{\frac{n-1}{2}}(1-q^n)\frac{(q^2;q^2)_{\frac{n-3}{2}}}{(q^2;q^2)_{\frac{n-1}{2}}}=-q^{\frac{n-1}{2}}\frac{1-q^n}{1-q^{n-1}}\equiv q^{\frac{n+1}{2}}[n]\pmod{\Phi_{n}(q)^2},
\end{align*}
we conclude that the $q$-congruence \eqref{thm-q-01} is true modulo  $\Phi_{n}(q)^2$. The final step is to prove that the $q$-congruence \eqref{thm-q-01} is valid modulo $[n]$.

Based on the preceding proof, we have obtained the following congruence:
\begin{align}\label{thm-acong2}
	\sum_{i=0}^{\frac{n-1}{2}}\sum_{j=0}^{\frac{n-1}{2}}\frac{(q;q^{2})_{i+j}(q;q^{2})_{i}(q;q^{2})_{j}}{(q^2;q^{2})_{i+j}(q^2;q^2)_i(q^2;q^2)_j}q^{2i}\equiv0 \pmod{\Phi_n(q)}.
\end{align}

	Let $\zeta\neq 1$ be a primitive root of unity of degree $v$ with $v|n$ and $v>1.$ Let $c_q(i,j)$ be the $(i,j)$-th term on the left-hand side of \eqref{thm-acong2}, i.e.,
$$
c_q(i,j)=\frac{(q;q^{2})_{i+j}(q;q^{2})_{i}(q;q^{2})_{j}}{(q^2;q^{2})_{i+j}(q^2;q^2)_i(q^2;q^2)_j}q^{2i}.
$$
Moreover, since $(\zeta;\zeta^{2})_{k}=0$ when $k>\frac{v-1}{2}$, the $q$-congruences \eqref{thm-acong2} with $n=v$ shows that
$$\sum_{i=0}^{v-1}\sum_{j=0}^{v-1}c_\zeta(i,j)=\sum_{i=0}^{\frac{v-1}{2}}\sum_{j=0}^{v-1}c_\zeta(i,j)=\sum_{i=0}^{v-1}\sum_{j=0}^{\frac{v-1}{2}}c_\zeta(i,j)=\sum_{i=0}^{\frac{v-1}{2}}\sum_{j=0}^{\frac{v-1}{2}}c_\zeta(i,j)=0,$$
and we can obviously obtain
$$c_\zeta(kv+i,lv+j)=c_\zeta(kv,lv)c_\zeta(i,j).$$
Thus, 
\begin{align*}
\sum_{i=0}^{\frac{n-1}{2}}\sum_{j=0}^{\frac{n-1}{2}}c_\zeta(i,j)&=\sum_{k=0}^{\frac{n-3v}{2v}}\sum_{l=0}^{\frac{n-3v}{2v}}\sum_{i=0}^{v-1}\sum_{j=0}^{v-1}c_\zeta(kv+i,lv+j)+\sum_{k=0}^{\frac{n-3v}{2v}}\sum_{i=0}^{v-1}\sum_{j=0}^{\frac{v-1}{2}}c_\zeta(kv+i,\frac{n-v}{2}+j)\notag\\
&+\sum_{l=0}^{\frac{n-3v}{2v}}\sum_{i=0}^{\frac{v-1}{2}}\sum_{j=0}^{v-1}c_\zeta(\frac{n-v}{2}+i,lv+j)+\sum_{i=0}^{\frac{v-1}{2}}\sum_{j=0}^{\frac{v-1}{2}}c_\zeta(\frac{n-v}{2}+i,\frac{n-v}{2}+j)\\
&=\sum_{k=0}^{\frac{n-3v}{2v}}\sum_{l=0}^{\frac{n-3v}{2v}}c_\zeta(kv,lv)\sum_{i=0}^{v-1}\sum_{j=0}^{v-1}c_\zeta(i,j)+\sum_{k=0}^{\frac{n-3v}{2v}}c_\zeta(kv,\frac{n-v}{2})\sum_{i=0}^{v-1}\sum_{j=0}^{\frac{v-1}{2}}c_\zeta(i,j)\notag\\
&+\sum_{l=0}^{\frac{n-3v}{2v}}c_\zeta(\frac{n-v}{2},lv)\sum_{i=0}^{\frac{v-1}{2}}\sum_{j=0}^{v-1}c_\zeta(i,j)+c_\zeta(\frac{n-v}{2},\frac{n-v}{2})\sum_{i=0}^{\frac{v-1}{2}}\sum_{j=0}^{\frac{v-1}{2}}c_\zeta(i,j)\\
&=0.
\end{align*}
This shows that the left-hand side of the $q$-congruence \eqref{thm-q-01} is divisible by $\Phi_v(q)$, where $v>1$ and $v\mid n$. The proof then follows the fact that $\prod_{v\mid n,v>1}\Phi_{v}(q)=[n].$ Then the derivation of \eqref{thm-q-01} follows from the fact that $\mathrm{lcm}(\Phi_n(q)^2,[n])=[n]\Phi_n(q) $.
\end{proof}
\begin{proof}[Proof of \eqref{thm-q-02}]  	Since $\gcd{(n,d)}=1$ and $1\leq \frac{n-r}{d} \leq \frac{n-1}{2}$,  we conclude that  the denominator on the left side of \eqref{thm-q-02} is coprime to $\Phi_n(q)$. Similarly as before, when  $d>2$, choosing $b=1$ in \eqref{qlemma11cong11111}, we have, modulo $\Phi_n(q)(a-q^n)$,
\begin{align}\label{thm-acong3}
	&\Phi^{(1)}[q^r/a;q^r,q^r;q^d;q^d;q^d,aq^{d-2r}]_{\frac{n-r}{d}}\notag\\
	&\quad\equiv \frac{(q^{d-2r};q^d)_{\frac{n-r}{d}}}{(q^d;q^d)_{\frac{n-r}{d}}}q^{\frac{r(n-r)}{d}}\times \frac{1-q^n}{1-a}\notag\\
	&\quad+\frac{(aq^{d-r};q^d)_{\frac{n-r}{d}}(aq^{d-2r};q^d)_{\frac{n-r}{d}}}{(q^d;q^d)_{\frac{n-r}{d}}(q^{d-r};q^d)_{\frac{n-r}{d}}}(q^r/a)^{\frac{n-r}{d}}\times \frac{a-q^n}{a-1}\notag\\
		&\quad\equiv \frac{(q^{d-2r};q^d)_{\frac{n-r}{d}}}{(q^d;q^d)_{\frac{n-r}{d}}}q^{\frac{r(n-r)}{d}}\times \frac{1-a+a-q^n}{1-a}\notag\\
	&\quad+\frac{(aq^{d-r};q^d)_{\frac{n-r}{d}}(aq^{d-2r};q^d)_{\frac{n-r}{d}}}{(q^d;q^d)_{\frac{n-r}{d}}(q^{d-r};q^d)_{\frac{n-r}{d}}}(q^r/a)^{\frac{n-r}{d}}\times \frac{a-q^n}{a-1}\notag\\
&\quad\equiv \frac{(q^{d-2r};q^d)_{\frac{n-r}{d}}}{(q^d;q^d)_{\frac{n-r}{d}}}q^{\frac{r(n-r)}{d}}+	\frac{(a-q^n)q^{\frac{r(n-r)}{d}}}{(q^d;q^d)_{\frac{n-r}{d}}(q^{d-r};q^d)_{\frac{n-r}{d}}}\notag\\
&\quad \times\frac{a^{\frac{n-r}{d}}(q^{d-r};q^d)_{\frac{n-r}{d}}(q^{d-2r};q^d)_{\frac{n-r}{d}}-(aq^{d-r};q^d)_{\frac{n-r}{d}}(aq^{d-2r};q^d)_{\frac{n-r}{d}}}{a^{\frac{n-r}{d}}(1-a)}.
\end{align}
Finally, by applying the L'H\^{o}pital rule and straightforward computation, we obtain
\begin{align*} 
		&\lim_{a\to 1}\frac{(a-q^n)q^{\frac{r(n-r)}{d}}}{(q^d;q^d)_{\frac{n-r}{d}}(q^{d-r};q^d)_{\frac{n-r}{d}}}\frac{a^{\frac{n-r}{d}}(q^{d-r};q^d)_{\frac{n-r}{d}}(q^{d-2r};q^d)_{\frac{n-r}{d}}-(aq^{d-r};q^d)_{\frac{n-r}{d}}(aq^{d-2r};q^d)_{\frac{n-r}{d}}}{a^{\frac{n-r}{d}}(1-a)}\\
		&\quad=\frac{q^{\frac{r(n-r)}{d}}(q^{d-2r};q^d)_{\frac{n-r}{d}}}{(q^d;q^d)_{\frac{n-r}{d}}}\bigg\{-\frac{n-r}{d}(1-q^n)- [n]\sum_{j=1}^{\frac{n-r}{d}}\bigg(\frac{q^{dj-r}}{[dj-r]}+\frac{q^{dj-2r}}{[dj-2r]}\bigg)\bigg\}.
\end{align*}
Hence, by letting $a\to 1$ in \eqref{thm-acong3}, we have proven \eqref{thm-q-02} through the above limit.

\end{proof}
\begin{proof}[Proof of \eqref{thm-q-03}] 
	
Following the line of proof for \eqref{thm-q-02} and letting $b=1$ in \eqref{qcona1}, we get, modulo $\Phi_{n}(q)(a-q^{(d-1)n})$,
\begin{align}\label{thm-q-03-qcona1}
	&\sum_{i=0}^{n-1}\sum_{j=0}^{n-1-i}\frac{(q^{d-r}/a;q^{d})_{i+j}(q^{r};q^{d})_{i}(q^{r};q^{d})_{j}}{(q^d;q^d)_{i+j}(q^d;q^d)_i(q^d;q^d)_j}q^{di}a^j\notag\\
	&\quad\equiv q^{r(n-1-\frac{n-r}{d})}\frac{(q^{d-2r};q^d)_{n-1-\frac{n-r}{d}}}{(q^{d};q^d)_{n-1-\frac{n-r}{d}}}\frac{1-q^{(d-1)n}}{1-a}+(q^{d-r}/a)^{\frac{n-r}{d}}\frac{(aq^{r},a;q^d)_{\frac{n-r}{d}}}{(q^{d},q^{d-r};q^d)_{\frac{n-r}{d}}}\frac{a-q^{(d-1)n}}{a-1}\notag\\
		&\quad= q^{r(n-1-\frac{n-r}{d})}\frac{(q^{d-2r};q^d)_{n-1-\frac{n-r}{d}}}{(q^{d};q^d)_{n-1-\frac{n-r}{d}}}(1+\frac{a-q^{(d-1)n}}{1-a})+(q^{d-r}/a)^{\frac{n-r}{d}}\frac{(aq^{r},a;q^d)_{\frac{n-r}{d}}}{(q^{d},q^{d-r};q^d)_{\frac{n-r}{d}}}\frac{a-q^{(d-1)n}}{a-1}\notag\\
			&\quad\equiv q^{r(n-1-\frac{n-r}{d})}\frac{(q^{d-2r};q^d)_{n-1-\frac{n-r}{d}}}{(q^{d};q^d)_{n-1-\frac{n-r}{d}}}- (q^{d-r}/a)^{\frac{n-r}{d}}(a-q^{(d-1)n})\frac{(aq^{r};q^d)_{\frac{n-r}{d}}(aq^d;q^d)_{\frac{n-r-d}{d}}}{(q^{d},q^{d-r};q^d)_{\frac{n-r}{d}}},
\end{align}
where the final congruence holds because $(q^{d-2r};q^d)_{n-1-\frac{n-r}{d}}$ contains the factor $1-q^{(d-2)n}$.

Next, by setting $a=1$ in \eqref{thm-q-03-qcona1} and making use of Lemma \ref{Th1proofLemma1} and the congruences below:
\begin{align*}
q^{r(n-1-\frac{n-r}{d})}\frac{(q^{d-2r};q^d)_{n-1-\frac{n-r}{d}}}{(q^{d};q^d)_{n-1-\frac{n-r}{d}}}&=q^{r(n-1-\frac{n-r}{d})}\frac{(q^{d-2r};q^d)_{n-1}(q^{dn-n+r};q^d)_{\frac{n-r}{d}}}{(q^d;q^d)_{n-1}(q^{-r+(d-1)n};q^d)_{\frac{n-r}{d}}}\notag\\
&\equiv q^{-r-r(n-r)/d}\frac{(q^{d-2r};q^d)_{n-1}}{(q^d;q^d)_{n-1}}\frac{(q^{r};q^d)_{\frac{n-r}{d}}}{(q^{-r};q^d)_{\frac{n-r}{d}}}\notag\\
&\equiv -q^{-r(n-r)/d}\frac{(q^{d-2r};q^d)_{n-1}}{n}\frac{(q^{r+d};q^d)_{\frac{n-r-d}{d}}}{(q^{-r+d};q^d)_{\frac{n-r-d}{d}}}\pmod{\Phi_{n}(q)^2},
\end{align*}
and $1-q^{(d-1)n}\equiv (d-1)(1-q^n)\pmod{\Phi_{n}(q)^2}$,
we have demonstrated that $q$-congruence \eqref{thm-q-03} is valid.

\end{proof}

       \section{Proof of Theorem \ref{thm-aaa}}
	\setcounter{lemma}{0}
\setcounter{theorem}{0}
\setcounter{equation}{0}
\setcounter{conjecture}{0}
\setcounter{remark}{0}
\setcounter{corollary}{0}       
       
      In this section, we prove Theorem \ref{thm-aaa} via the $q$-Vandermonde summation formula \eqref{identiti2} and the Andrews' terminating $q$-analogue of Watson's ${}_{3}F_2$ sum \eqref{identiti4}. Additionally, we propose a related conjecture. Prior to the proof, we establish the following auxiliary result.
 \begin{lemma}\label{thm-aaa-Lemma1}
 	Let $n$ be an odd positive integer and let $j$ be an integer with $0\leq j\leq \frac{n-1}{2}$.   Then, 
 	\begin{align}\label{thm-aaa-Lemma1-qcon1}
 		\frac{(q;q^2)_{\frac{n-1}{2}-j}}{(q^2;q^2)_{\frac{n-1}{2}-j}}q^{\frac{n-1}{2}-j}=\frac{(q^{2-n};q^2)_{\frac{n-1}{2}}(q^{1-n};q^2)_j}{(q^{1-n};q^2)_{\frac{n-1}{2}}(q^{2-n};q^2)_j}.
 	\end{align}
 \end{lemma}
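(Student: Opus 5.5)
Set $m=\frac{n-1}{2}$; the statement then reads, for $0\le j\le m$,
\[
\frac{(q;q^2)_{m-j}}{(q^2;q^2)_{m-j}}\,q^{m-j}=\frac{(q^{2-n};q^2)_{m}\,(q^{1-n};q^2)_j}{(q^{1-n};q^2)_{m}\,(q^{2-n};q^2)_j}.
\]
This is a finite rational identity, so I will prove it by reversing products. Since $q^{2-n}=q^{1-2m}$ and $q^{1-n}=q^{-2m}$, the quotients $\frac{(q^{1-2m};q^2)_m}{(q^{1-2m};q^2)_j}$ and $\frac{(q^{-2m};q^2)_m}{(q^{-2m};q^2)_j}$ are the tail products over $k=j,\dots,m-1$. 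The right-hand side therefore becomes
\[
\prod_{k=j}^{m-1}\frac{1-q^{1-2m+2k}}{1-q^{-2m+2k}}.
\]
No factor of the second kind vanishes for $k\le m-1$, so this division is legitimate.

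Next I substitute $l=m-1-k$, which runs over $0,\dots,m-1-j$. The factors become $\frac{1-q^{-(2l+1)}}{1-q^{-(2l+2)}}$. Applying $1-q^{-t}=-q^{-t}(1-q^{t})$ to numerator and denominator, each factor equals
\[
q^{(2l+2)-(2l+1)}\,\frac{1-q^{2l+1}}{1-q^{2l+2}}=q\,\frac{1-q^{2l+1}}{1-q^{2l+2}},
\]
since the two minus signs cancel.

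Multiplying over the $m-j$ values of $l$ gives $q^{m-j}\frac{(q;q^2)_{m-j}}{(q^2;q^2)_{m-j}}$, which is the left-hand side. The only point needing care is the index bookkeeping in the reversal, namely that the number of factors is exactly $m-j$; this also covers the edge case $j=m$, where both sides equal $1$.
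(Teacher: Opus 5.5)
Your proof is correct and is essentially the paper's argument: both reverse the $q$-shifted factorials using $1-q^{-t}=-q^{-t}(1-q^{t})$. The only difference is cosmetic. The paper reverses $(q;q^2)_{\frac{n-1}{2}-j}$ and $(q^2;q^2)_{\frac{n-1}{2}-j}$ separately, picking up $(-1)^{\frac{n-1}{2}-j}q^{(n-2j-1)^2/4}$ and $(-1)^{\frac{n-1}{2}-j}q^{((n-2j)^2-1)/4}$, and then divides; you instead pair numerator and denominator factors, so the signs cancel at once and each pair contributes a single factor $q$.
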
     
 \begin{proof}  [Sketch of proof.]   
A straightforward derivation yields:
\begin{align}\label{thm-aaa-Lemma1-qcon2}
(q;q^2)_{\frac{n-1}{2}-j}&=(-1)^{\frac{n-1}{2}-j}(q^{2j+2-n};q^2)_{\frac{n-1}{2}-j}q^{(n-2j-1)^2/4}\notag\\
&=(-1)^{\frac{n-1}{2}-j}\frac{(q^{2-n};q^2)_{\frac{n-1}{2}}}{(q^{2-n};q^2)_{j}}q^{(n-2j-1)^2/4},
\end{align}
and 
\begin{align}\label{thm-aaa-Lemma1-qcon3}
(q^2;q^2)_{\frac{n-1}{2}-j}&=(-1)^{\frac{n-1}{2}-j}(q^{2j+1-n};q^2)_{\frac{n-1}{2}-j}q^{((n-2j)^2-1)/4}\notag\\
&=(-1)^{\frac{n-1}{2}-j}\frac{(q^{1-n};q^2)_{\frac{n-1}{2}}}{(q^{1-n};q^2)_{j}}q^{((n-2j)^2-1)/4}.
\end{align}
By combining the two aforementioned relations \eqref{thm-aaa-Lemma1-qcon2} and \eqref{thm-aaa-Lemma1-qcon3}, \eqref{thm-aaa-Lemma1-qcon1} follows directly.

\end{proof}      
      
       \begin{proof}[Proof of Theorem \ref{thm-aaa}]
      By applying the formula $q^n\equiv 1\pmod{\Phi_{n}(q)}$ and Lemma \ref{thm-aaa-Lemma1}, we derive, modulo $\Phi_{n}(q)$,
  	\begin{align*}
  	&\sum_{i=0}^{\frac{n-1}{2}}\sum_{j=0}^{\frac{n-1}{2}}\frac{(q;q^{2})_{i+j}(q;q^{2})_{i}(q,-q;q^{2})_{j}}{(q^2;q^2)_i^2(q^2;q^2)_j^2(-q^2;q^2)_j}q^{2i+2j}\notag\\
  	&\quad\equiv \sum_{i=0}^{\frac{n-1}{2}}\sum_{j=0}^{\frac{n-1}{2}}\frac{(q^{1-n};q^{2})_{i+j}(q;q^{2})_{i}(q,-q;q^{2})_{j}}{(q^2;q^2)_i^2(q^2;q^2)_j^2(-q^{2-n};q^2)_j}q^{2i+2j}\notag\\
  	&\quad= \sum_{j=0}^{\frac{n-1}{2}}\frac{(q^{1-n},q,-q;q^{2})_{j}q^{2j}}{(q^2;q^2)_j^2(-q^{2-n};q^2)_j}\sum_{i=0}^{\frac{n-1}{2}}\frac{(q^{1-n+2j};q^{2})_{i}(q;q^{2})_{i}}{(q^2;q^2)_i^2}q^{2i}\notag\\
  	&\quad= \sum_{j=0}^{\frac{n-1}{2}}\frac{(q^{1-n},q,-q;q^{2})_{j}q^{2j}}{(q^2;q^2)_j^2(-q^{2-n};q^2)_j}\frac{(q;q^{2})_{\frac{n-1}{2}-j}}{(q^2;q^2)_{\frac{n-1}{2}-j}}q^{\frac{n-1}{2}-j}\notag\\	&\quad\equiv  \frac{(q^{2};q^2)_{\frac{n-1}{2}}}{(q;q^2)_{\frac{n-1}{2}}}\sum_{j=0}^{\frac{n-1}{2}}\frac{(q^{1-n},q,-q,q^{1-n};q^{2})_{j}q^{2j}}{(q^2,q^2,-q^{2-n},q^{2-n};q^2)_j}\notag\\	
  		&\quad=\begin{cases}
  		0 &\text{if }n\equiv 3\pmod{4},\\[5pt]
  			\dfrac{(q^{2};q^2)_{\frac{n-1}{2}}}{(q;q^2)_{\frac{n-1}{2}}}\dfrac{q^{\frac{n-1}{2}}(q^2,q^{2-2n};q^4)_{\frac{n-1}{4}}}{(q^{4-2n},q^4;q^4)_{\frac{n-1}{4}}}&\text{if }n\equiv 1\pmod{4},
  		\end{cases}\notag\\	
  		&\quad\equiv \begin{cases}
  			0 &\text{if }n\equiv 3\pmod{4},\\[5pt]
  			\dfrac{q^{3(n-1)/4}(q^{2};q^4)_{\frac{n-1}{4}}^2}{(q^4;q^4)_{\frac{n-1}{4}}^2}&\text{if }n\equiv 1\pmod{4},
  		\end{cases}
  		 \end{align*}     	
where in the third equality utilizes formula \eqref{identiti2} with $n \mapsto \frac{n-1}{2}-j, q\mapsto q^2,a\mapsto q, c\mapsto q^2$ and in the second-to-last equality we have used  	\eqref{identiti4} with $n \mapsto \frac{n-1}{2}, q\mapsto q^2,a\mapsto q^{-n}, c\mapsto q $.	 
This leads us directly to \eqref{qconaaa1}.

       \end{proof}
Based on comprehensive numerical evidence, we propose the following conjectural statement, which provides a $q$-analogue of the first case of \eqref{Appellseries07}. 
\begin{conjecture}
	Let $n$ be a positive odd integer with $n\equiv 3\pmod{4}$. Then,
\begin{align*}
	\sum_{i=0}^{\frac{n-1}{2}}\sum_{j=0}^{\frac{n-1}{2}}\frac{(q;q^{2})_{i+j}(q;q^{2})_{i}(q;q^{2})_{j}(-q;q^{2})_{j}}{(q^2;q^2)_i^2(q^2;q^2)_j^2(-q^2;q^2)_j}q^{2i+2j}\equiv0\pmod{\Phi_{n}(q)^2}.
\end{align*}	                           
\end{conjecture}

       \section{Proof of Theorem \ref{thm-heliu02}}
       	\setcounter{lemma}{0}
       \setcounter{theorem}{0}
       \setcounter{equation}{0}
       \setcounter{conjecture}{0}
       \setcounter{remark}{0}
       \setcounter{corollary}{0}
       Following the methodology of Theorem \ref{thm-aaa}'s proof, this section employs the  $q$-Chu-Vandermonde identity \eqref{identiti3} in conjunction with the Andrews' terminating $q$-analogue of Watson's ${}_{3}F_2$ sum \eqref{identiti4} to establish the result.

       \begin{proof}
       	Due to the following two formulas, 
     $$(q^{1-n},q^{1+n};q^2)_k\equiv(q,q;q^2)_k\pmod{\Phi_{n}(q)^2},$$  and
     $$\sum_{j=0}^kq^{2j^2}\qbinom{k}{j}{q^2}^2=\sum_{j=0}^kq^{2j^2}\qbinom{k}{j}{q^2}\qbinom{k}{k-j}{q^2}=\qbinom{2k}{k}{q^2},$$
 we obtain, modulo $\Phi_{n}(q)^2$, 
 
\begin{align*}
	&\sum_{i=0}^{\frac{n-1}{2}}\sum_{j=0}^{\frac{n-1}{2}}\frac{(q;q^{2})_{i+j}^2q^{2i+2j+2j^2}}{(q^2;q^2)_i^2(q^2;q^2)_j^2(-q^2;q^2)_{i+j}^2}\notag\\
	&\quad\equiv \sum_{i=0}^{\frac{n-1}{2}}\sum_{j=0}^{\frac{n-1}{2}}\frac{(q;q^{2})_{i+j}^2}{(q^2;q^2)_{i+j}^2(-q^2;q^2)_{i+j}^2}\frac{(q^2;q^2)_{i+j}^2q^{2i+2j+2j^2}}{(q^2;q^2)_{i}^2(q^2;q^2)_j^2}\notag\\
	&\quad= \sum_{k=0}^{n-1}\sum_{j=0}^k\frac{(q;q^{2})_{k}^2q^{2k}}{(q^2;q^2)_{k}^2(-q^2;q^2)_{k}^2}\frac{(q^2;q^2)_{k}^2q^{2j^2}}{(q^2;q^2)_{k-j}^2(q^2;q^2)_j^2}\notag\\
	&\quad= \sum_{k=0}^{n-1}\frac{(q;q^{2})_{k}^2q^{2k}}{(q^2;q^2)_{k}^2(-q^2;q^2)_{k}^2}\sum_{j=0}^{k}\frac{(q^2;q^2)_{k}^2q^{2j^2}}{(q^2;q^2)_{k-j}^2(q^2;q^2)_j^2}\notag\\	&\quad\equiv  \sum_{k=0}^{(n-1)/2}\frac{(q^{1-n},q^{1+n};q^{2})_{k}q^{2k}}{(q^2;q^2)_{k}^2(-q^2;q^2)_{k}^2}\sum_{j=0}^{k}q^{2j^2}\qbinom{k}{j}{q^2}\qbinom{k}{k-j}{q^2}\notag\\
	&\quad=  \sum_{k=0}^{(n-1)/2}\frac{(q^{1-n},q^{1+n};q^{2})_{k}q^{2k}}{(q^2;q^2)_{k}^2(-q^2;q^2)_{k}^2}\qbinom{2k}{k}{q^2}\notag\\
	&\quad=  \sum_{k=0}^{(n-1)/2}\frac{(q^{1-n},q^{1+n};q^{2})_{k}(q^2;q^2)_{2k}q^{2k}}{(q^2;q^2)_{k}^4(-q^2;q^2)_{k}^2}\notag\\	
	&\quad=  \sum_{k=0}^{(n-1)/2}\frac{(q^{1-n},q^{1+n};q^{2})_{k}(q;q^2)_k(-q;q^2)_kq^{2k}}{(q^2;q^2)_{k}^3(-q^2;q^2)_{k}}\notag\\	
	&\quad\equiv \begin{cases}
		0 &\text{if }n\equiv 3\pmod{4},\\[5pt]
		\dfrac{q^{(n-1)/2}(q^{2};q^4)_{\frac{n-1}{4}}^2}{(q^4;q^4)_{\frac{n-1}{4}}^2}&\text{if }n\equiv 1\pmod{4}.
	\end{cases}
\end{align*}      
 In deriving the last step, we employ \eqref{identiti4} with $q\mapsto q^2, n \mapsto \frac{n-1}{2}, a\mapsto 1, c\mapsto q $. This concludes the proof of Theorem \ref{thm-heliu02}.

 \end{proof}

 \section{Proof of Theorem \ref{thm-ApaZeil-qcon-12}}  
 	\setcounter{lemma}{0}
 \setcounter{theorem}{0}
 \setcounter{equation}{0}
 \setcounter{conjecture}{0}
 \setcounter{remark}{0}
 \setcounter{corollary}{0}
 In this chapter, via the `creative microscoping' method, we will first present the following 
 $q$-congruence: for arbitrary positive integers  $n,s,r$ with the constraint $r\leq s$,
 \begin{align} \label{thm-ApaZeil-lemma1-con2}
	&\sum_{i=0}^{rn-1}\sum_{j=0}^{sn-1}\frac{(q/a;q)_{i+j}(aq;q)_{i+j}}{(q/a;q)_i(aq;q)_i(q;q)_j^2}q^{i+j}\equiv\bigg(\frac{rn}{3}\bigg)q^{2(r^2n^2-1)/3} \pmod{(a-q^{rn})(1-aq^{rn})}. 
\end{align}   
 \begin{proof}[Sketch of proof.]
For $a=q^{rn}$ or $a=q^{-rn}$,  the left side of \eqref{thm-ApaZeil-lemma1-con2} is equal to:
\begin{align*}
	&\sum_{i=0}^{rn-1}\sum_{j=0}^{sn-1}\frac{(q^{1-rn};q)_{i+j}(q^{1+rn};q)_{i+j}}{(q^{1-rn};q)_i(q^{1+rn};q)_i(q;q)_j^2}q^{i+j}\notag\\
	&=\sum_{i=0}^{rn-1}q^i\sum_{j=0}^{rn-1}\frac{(q^{1-rn+i};q)_{j}(q^{1+rn+i};q)_{j}q^j}{(q;q)_j^2}  \notag\\
	&= \sum_{i=0}^{rn-1}\frac{(q^{-rn-i};q)_{rn-1-i}q^{r^2n^2-(1+i)^2+i}}{(q;q)_{rn-1-i}} \notag\\
	&= \sum_{i=0}^{rn-1}\frac{(q^{1-2rn+i};q)_{i}q^{rn-1+(2rn-1-i)i}}{(q;q)_{i}}=\bigg(\frac{rn}{3}\bigg)q^{2(r^2n^2-1)/3}.
\end{align*}  
In the second step, we employed the $q$-Chu-Vandermonde summation formula \eqref{identiti2}, and in the third step, we replaced $i$ with $rn-1-i$. It is worth noting that the final step utilizes formula \cite[(6.1)]{Hewang2026}, which was proved by He and Wang \cite{Hewang2026} using the $q$-Zeilberger algorithm. Thus, the validity of the equation \eqref{thm-ApaZeil-lemma1-con2} is established. 
\end{proof} 
It is evident that the case $(a,r,s)=(1,1,1)$ of \eqref{thm-ApaZeil-lemma1-con2} reduces to \eqref{Hewang01}, thereby providing a $q$-analogue for \eqref{ApaZeilberger-conje-1}.      
In fact, our attempts to prove Theorem \ref{thm-ApaZeil-qcon-12} via the `creative microscoping' method proved unsuccessful, chiefly due to our inability to demonstrate that the numerator of \eqref{thm-ApaZeil-qcon-2} contains sufficiently high powers of $\Phi_n(q)$. Consequently, we altered our approach and turned to differential operators and root-of-unity properties in order to derive the required identities. Prior to the proof of Theorem \ref{thm-ApaZeil-qcon-12}, we shall require the following three lemmas.

Prior to the proof of Theorem \ref{thm-ApaZeil-qcon-12}, we shall require the $q$-analogue of the classical congruence 
\begin{equation}
	\binom{ap}{bp} \equiv \binom{a}{b} \pmod{p^3}\qquad ( p\geq 5).
\end{equation}
The reader is referred to \cite{An1999, Straub} for further discussion.

\begin{lemma}\cite[ Theorem 2.2]{Straub} \label{L1} For positive integers $n$ and nonnegative integers $A, a$,
	
	\begin{equation}\label{L1eq}
		\qbinom{An}{an}{q}\equiv \qbinom{A}{a}{q^{n^2}}-(A-a)a\binom{A}{a}\frac{n^2-1}{24}(q^n-1)^2
		\pmod{\Phi_n(q)^3}.
	\end{equation}
\end{lemma}

\begin{lemma}\label{L2}
	Define
	\[
	\Lambda_t(q):=\sum_{j=1}^t \frac{q^j}{1-q^j}
	\qquad (t\ge0,\ \Lambda_0(q):=0).
	\]
	Then,
	\begin{equation}\label{L2eq}
		\sum_{m=0}^{n-1} q^m \sum_{b=0}^m
		\begin{bmatrix}
			m\\
			b
		\end{bmatrix}_q^{\!2}
		\Bigl(2\Lambda_b(q)-2\Lambda_m(q)-(n-1)\Bigr)
		\equiv 0 \pmod{\Phi_n(q)}.
	\end{equation}
\end{lemma}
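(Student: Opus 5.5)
The plan is to work pointwise at a primitive $n$-th root of unity and to turn the weights $\Lambda_b-\Lambda_m$ into a derivative with respect to an auxiliary parameter, in the spirit of creative microscoping. Since every $q$-binomial $\qbinom{m}{b}{q}$ with $0\le b\le m\le n-1$ is a polynomial, and each $\Lambda_t(q)$ with $t\le n-1$ has denominators coprime to $\Phi_n(q)$, it suffices to show that the left-hand side of \eqref{L2eq} vanishes at $q=\zeta$, a primitive $n$-th root of unity.

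\textbf{Step 1 (absorbing $n-1$).} From $\sum_{j=1}^{n-1}\frac{1}{1-\zeta^j}=\frac{n-1}{2}$ we get $\Lambda_{n-1}(\zeta)=-\frac{n-1}{2}$. Hence at $q=\zeta$ the weight becomes
\[
2\Lambda_b-2\Lambda_m-(n-1)=2\bigl(\Lambda_b-\Lambda_m+\Lambda_{n-1}\bigr).
\]
The substitution $j\mapsto n-j$ gives
\[
\Lambda_{n-1}-\Lambda_m=-(n-1-m)-\Lambda_{n-1-m}
\]
at $\zeta$. This reflection formula is available if a pairing $m\leftrightarrow n-1-m$ is needed later.

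\textbf{Step 2 (the derivative).} I would introduce the deformed double sum
\[
T(a)=\sum_{m=0}^{n-1}q^m\sum_{b=0}^m\frac{(aq;q)_m^2}{(aq;q)_b^2\,(q;q)_{m-b}^2}.
\]
Then $T(1)=\sum_m q^m\sum_b\qbinom{m}{b}{q}^2=\sum_m q^m\qbinom{2m}{m}{q}$, using \eqref{identiti3}. Since $\frac{d}{da}\log(aq;q)_k\big|_{a=1}=-\Lambda_k(q)$, we have
\[
T'(1)=\sum_m q^m\sum_b\qbinom{m}{b}{q}^2\bigl(2\Lambda_b-2\Lambda_m\bigr).
\]
So \eqref{L2eq} is equivalent to $T'(1)\equiv(n-1)\,T(1)\pmod{\Phi_n(q)}$. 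Equivalently, $\frac{d}{da}\bigl(a^{-(n-1)}T(a)\bigr)$ vanishes at $a=1$, $q=\zeta$.

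\textbf{Step 3 (closed form of $T$).} Next I would evaluate the inner sum over $b$ of $T(a)$ in closed form by a $q$-Chu--Vandermonde sum \eqref{identiti2}/\eqref{identiti2-1}, and then the sum over $m$ by a second Vandermonde-type summation. For this I would write $1/(aq;q)_b$ via $(q^{-m};q)_b$-type factors after reversing $b\mapsto m-b$. The goal is an expression for $T(a)$ modulo $\Phi_n(q)$ whose $a$-dependence near $a=1$ is visibly of the form $a^{n-1}\cdot(\text{something independent of }a)+O\bigl((a-1)^2\bigr)$. The exponent $n-1$ should arise as a power like $a^{n-1}$ produced by \eqref{identiti2}, as in the proof of \eqref{thm-ApaZeil-lemma1-con2}. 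If a closed form is not reachable, the fallback is to pair the terms $m$ and $n-1-m$ at $\zeta$. This uses $(\zeta;\zeta)_{n-1-m}(\zeta;\zeta)_m=n\,(\pm\zeta^{\ast})$ (Lemma \ref{Th1proofLemma1} type relations) together with the reflection formula of Step 1, and aims to show that the combined weight sums to zero by antisymmetry.

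\textbf{Main obstacle.} I expect Step 3 to be the hard part. The inner sum with the squared, $a$-deformed binomial is not directly a terminating ${}_2\phi_1$ in $a$. The main difficulty is therefore choosing the deformation (which Pochhammers carry $a$, possibly $(q/a;q)$ and $(aq;q)$ symmetrically, as in \eqref{thm-ApaZeil-lemma1-con2}) so that both Vandermonde steps apply. The choice must also keep the first-order term in $a-1$ equal to exactly $2(\Lambda_b-\Lambda_m)$. I would first try the symmetric deformation $(q/a;q)_m(aq;q)_m/\bigl((q/a;q)_b(aq;q)_b\bigr)$ and reuse the computation behind \eqref{thm-ApaZeil-lemma1-con2}. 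If that fails, I would fall back on the root-of-unity pairing argument.
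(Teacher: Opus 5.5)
\textbf{There is a genuine gap.} Your reductions in Steps 1--2 are correct: $\Lambda_{n-1}(\zeta)=-\frac{n-1}{2}$, the reflection formula, and the equivalence with $T'(1)\equiv(n-1)T(1)$. But the whole proof sits in Step 3, which you do not carry out, and neither route you propose for it can work.

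\textbf{The primary route fails.} It asks for a Chu--Vandermonde evaluation of the inner sum over $b$ at fixed $m$. At $a=1$ that inner sum is $\sum_b\qbinom{m}{b}{q}^2$, which carries no quadratic weight and is not Vandermonde-summable. Your claim $T(1)=\sum_m q^m\qbinom{2m}{m}{q}$ misreads \eqref{identiti3}, which gives $\sum_b q^{b^2}\qbinom{m}{b}{q}^2=\qbinom{2m}{m}{q}$. Already for $m=2$ the unweighted sum is $3+2q+q^2$, not $\qbinom{4}{2}{q}=1+q+2q^2+q^3+q^4$. Its roots $-1\pm i\sqrt{2}$ are not roots of unity, so no product formula of Vandermonde type can produce it.

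\textbf{The fallback fails too.} Pairing row $m$ with row $n-1-m$ cannot give antisymmetric cancellation. The rows have different lengths, and for odd $n$ the middle row is its own partner, so antisymmetry would force its contribution to vanish. For $n=3$ and $q=\omega=e^{2\pi i/3}$, the row $m=1$ contributes $\omega\bigl(-2\Lambda_1(\omega)-4\bigr)=-2\omega(2-\omega)/(1-\omega)\neq 0$. The total vanishes only after all three rows are combined.

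\textbf{The missing idea} is a reindexing at the level of individual cells that turns rows into columns. Put $t=n-1-m+b$, so that $t-b=n-1-m$ and $0\le b\le t\le n-1$. At $q=\zeta$ one has $\qbinom{m}{b}{\zeta}=(-1)^b\zeta^{mb-\binom{b}{2}}\qbinom{t}{b}{\zeta}$. Hence $\zeta^m\qbinom{m}{b}{\zeta}^2=\zeta^{-t-1}\zeta^{-b^2}\qbinom{t}{b}{\zeta^{-1}}^2$, so the reflection produces exactly the quadratic weight that \eqref{identiti3} needs.

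Using $\Lambda_s(\zeta^{-1})=-s-\Lambda_s(\zeta)$ together with your own reflection formula, the weight becomes
$2\Lambda_b(\zeta)-2\Lambda_m(\zeta)-(n-1)=-2\bigl(b+\Lambda_b(\zeta^{-1})-\Lambda_{t-b}(\zeta^{-1})\bigr)$.
It then suffices to prove, for every $t$, the exact identity $\sum_{b=0}^{t}q^{b^2}\qbinom{t}{b}{q}^2\bigl(b+\Lambda_b(q)-\Lambda_{t-b}(q)\bigr)=0$, and apply it at $q=\zeta^{-1}$.

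This identity is where a derivative belongs, but with a different deformation from your $T(a)$. Specialize \eqref{identiti2-1} to
$\sum_b x^bq^{b^2}\qbinom{m}{b}{q}\frac{(x^{-1}q^{m-b+1};q)_b}{(xq;q)_b}=\frac{(q^{m+1};q)_m}{(xq;q)_m}$.
Differentiate at $x=1$, then subtract $\Lambda_m(q)$ times the $x=1$ case. Your $a$-deformation instead differentiates the unweighted row sums, which have no summation formula, so there is no closed form to differentiate.
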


\begin{proof}
	We divide the proof into three steps.

	\textbf{Step 1. } We first show that for every integer $m\ge 0$,
	\begin{equation}
		\sum_{b=0}^m q^{b^2}\qbinom{m}{b}{q}^{\!2}
		\bigl(b+\Lambda_b(q)-\Lambda_{m-b}(q)\bigr)=0.
		\label{eq:aux}
	\end{equation}
	
	To prove this, we start from the $q$-Chu--Vandermonde identity \eqref{identiti2-1}.
	Taking $a\mapsto xq^{-m}$, $c\mapsto xq$, $n\mapsto m$ in \eqref{identiti2-1} gives
	\begin{equation}
		\sum_{b=0}^m
		\frac{(q^{-m};q)_b(xq^{-m};q)_b}{(q;q)_b(xq;q)_b}
		\,q^{(2m+1)b}
		=
		\frac{(q^{m+1};q)_m}{(xq;q)_m}.
		\label{eq:4}
	\end{equation}
	
	We use the standard identities
	\begin{equation}
		(q^{-m};q)_b
		=
		(-1)^b q^{-mb+\binom{b}{2}}
		\frac{(q;q)_m}{(q;q)_{m-b}},
		\label{eq:5}
	\end{equation}
	and
	\begin{equation}
		(xq^{-m};q)_b
		=
		(-x)^b q^{-mb+\binom{b}{2}}
		(x^{-1}q^{m-b+1};q)_b.
		\label{eq:6}
	\end{equation}
	Substituting \eqref{eq:5} and \eqref{eq:6} into \eqref{eq:4}, we obtain
	\begin{equation}
		\sum_{b=0}^m
		x^b q^{b^2}
		\qbinom{m}{b}{q}
		\frac{(x^{-1}q^{m-b+1};q)_b}{(xq;q)_b}
		=
		\frac{(q^{m+1};q)_m}{(xq;q)_m}.
		\label{eq:7}
	\end{equation}
	
	Setting $x=1$ in \eqref{eq:7}, we get
	\begin{equation}
		\sum_{b=0}^m q^{b^2}\qbinom{m}{b}{q}^{\!2}
		=
		\frac{(q^{m+1};q)_m}{(q;q)_m}
		=
		\genfrac{[}{]}{0pt}{}{2m}{m}_q.
		\label{eq:8}
	\end{equation}
	This is likewise a particular instance of \eqref{identiti3}.
	
	Next define
	\[
	f_b(x):=
	x^b\frac{(x^{-1}q^{m-b+1};q)_b}{(xq;q)_b}.
	\]
	Then \eqref{eq:7} becomes
	\[
	\sum_{b=0}^m q^{b^2}\qbinom{m}{b}{q} f_b(x)
	=
	\frac{(q^{m+1};q)_m}{(xq;q)_m}.
	\]
	
	Since both sides are rational functions in $x$, we shall differentiation at $x=1$. 
	It is evident that
	\[
	\left.\frac{d}{dx}\log f_b(x)\right|_{x=1}
	=
	b
	+
	\left.\frac{d}{dx}\log (x^{-1}q^{m-b+1};q)_b\right|_{x=1}
	-
	\left.\frac{d}{dx}\log (xq;q)_b\right|_{x=1}.
	\]
	Now
	\[
	(x^{-1}q^{m-b+1};q)_b
	=
	\prod_{r=m-b+1}^{m}(1-q^r/x),
	\]
	so
	\[
	\left.\frac{d}{dx}\log (x^{-1}q^{m-b+1};q)_b\right|_{x=1}
	=
	\sum_{r=m-b+1}^{m}\frac{q^r}{1-q^r}.
	\]
	Similarly,
	\[
	\left.\frac{d}{dx}\log (xq;q)_b\right|_{x=1}
	=
	-\sum_{j=1}^{b}\frac{q^j}{1-q^j}.
	\]
	Hence
	\begin{equation}
		\left.\frac{d}{dx}\log f_b(x)\right|_{x=1}
		=
		b+\Lambda_m(q)-\Lambda_{m-b}(q)+\Lambda_b(q),
		\label{eq:9}
	\end{equation}
	and therefore
	\begin{equation}
		f_b'(1)=f_b(1)\left.\frac{d}{dx}\log f_b(x)\right|_{x=1}
		=
		\qbinom{m}{b}{q}
		\bigl(b+\Lambda_m(q)-\Lambda_{m-b}(q)+\Lambda_b(q)\bigr).
		\label{eq:10}
	\end{equation}
	
	On the other hand,
	\[
	\frac{(q^{m+1};q)_m}{(xq;q)_m}
	=
	(q^{m+1};q)_m\,(xq;q)_m^{-1},
	\]
	so
	\begin{equation}
		\left.\frac{d}{dx}\frac{(q^{m+1};q)_m}{(xq;q)_m}\right|_{x=1}
		=
		\Lambda_m(q)\frac{(q^{m+1};q)_m}{(q;q)_m}
		=
		\Lambda_m(q)\genfrac{[}{]}{0pt}{}{2m}{m}_q.
		\label{eq:11}
	\end{equation}
	
	Differentiating \eqref{eq:7} at $x=1$ and applying \eqref{eq:10}--\eqref{eq:11}, we obtain
	\begin{equation}
		\sum_{b=0}^m q^{b^2}\qbinom{m}{b}{q}^{\!2}
		\bigl(b+\Lambda_m(q)-\Lambda_{m-b}(q)+\Lambda_b(q)\bigr)
		=
		\Lambda_m(q)\genfrac{[}{]}{0pt}{}{2m}{m}_q.
		\label{eq:12}
	\end{equation}
	Using \eqref{eq:8}, this simplifies to
	\[
	\sum_{b=0}^m q^{b^2}\qbinom{m}{b}{q}^{\!2}
	\bigl(b+\Lambda_b(q)-\Lambda_{m-b}(q)\bigr)=0,
	\]
	which is exactly \eqref{eq:aux}.
	
	\medskip
	\noindent
	\textbf{Step 2. } To prove \eqref{L2eq}, we evaluate it at primitive $n$th roots of unity.
	
	Let $\zeta$ be a primitive $n$th root of unity, and define
	\begin{equation}
		S(\zeta):=
		\sum_{m=0}^{n-1}\zeta^m\sum_{b=0}^m
		\genfrac{[}{]}{0pt}{}{m}{b}_{\zeta}^{\!2}
		\bigl(2\Lambda_b(\zeta)-2\Lambda_m(\zeta)-(n-1)\bigr).
		\label{eq:13}
	\end{equation}
	We shall prove that $S(\zeta)=0$.
	
	For $0\le b\le m\le n-1$, set
	\[
	t:=n-1-m+b.
	\]
	Then $0\le b\le t\le n-1$, and conversely $m=n-1-t+b$.
	
 Since
	\[
	\genfrac{[}{]}{0pt}{}{m}{b}_{\zeta}
	=
	\prod_{j=1}^b\frac{1-\zeta^{m-b+j}}{1-\zeta^j},
	\]
	and
	\[
	1-\zeta^{m-b+j}
	=
	-\zeta^{m-b+j}\bigl(1-\zeta^{-(m-b+j)}\bigr),
	\]
	while $-(m-b+j)\equiv n-m+b-j=t-j+1 \pmod n$, we get
	\begin{equation}
		\genfrac{[}{]}{0pt}{}{m}{b}_{\zeta}
		=
		(-1)^b\zeta^{mb-\binom{b}{2}}
		\genfrac{[}{]}{0pt}{}{t}{b}_{\zeta}.
		\label{eq:14}
	\end{equation}
	
	We also use the standard identity
	\begin{equation}
		\genfrac{[}{]}{0pt}{}{t}{b}_{q^{-1}}
		=
		q^{-b(t-b)}
		\genfrac{[}{]}{0pt}{}{t}{b}_{q}.
		\label{eq:15}
	\end{equation}
	Combining \eqref{eq:14} and \eqref{eq:15}, and using $m=n-1-t+b$ and $\zeta^n=1$, one checks that
	\begin{equation}
		\zeta^m
		\genfrac{[}{]}{0pt}{}{m}{b}_{\zeta}^{\!2}
		=
		\zeta^{-t-1}\zeta^{-b^2}
		\genfrac{[}{]}{0pt}{}{t}{b}_{\zeta^{-1}}^{\!2}.
		\label{eq:16}
	\end{equation}
	
	Next, for any integer $s\ge 0$,
	\begin{equation}
		\Lambda_s(\zeta^{-1})
		=
		\sum_{j=1}^s \frac{\zeta^{-j}}{1-\zeta^{-j}}
		=
		\sum_{j=1}^s\left(-1-\frac{\zeta^j}{1-\zeta^j}\right)
		=
		-s-\Lambda_s(\zeta).
		\label{eq:17}
	\end{equation}
	Also, for $1\le j\le n-1$,
	\[
	\frac{\zeta^j}{1-\zeta^j}+\frac{\zeta^{n-j}}{1-\zeta^{n-j}}=-1,
	\]
	hence for $0\le u\le n-1$,
	\begin{equation}
		\Lambda_{n-1-u}(\zeta)
		=
		\Lambda_u(\zeta)-\frac{n-1}{2}+u.
		\label{eq:18}
	\end{equation}
	
	Now note that $t-b=n-1-m$. By \eqref{eq:17} and \eqref{eq:18},
	\begin{align}
		b+\Lambda_b(\zeta^{-1})-\Lambda_{t-b}(\zeta^{-1})
		&=
		b-b-\Lambda_b(\zeta)+(t-b)+\Lambda_{t-b}(\zeta) \notag\\
		&=
		-\Lambda_b(\zeta)+(n-1-m)+\Lambda_{n-1-m}(\zeta) \notag\\
		&=
		-\Lambda_b(\zeta)+(n-1-m)+\Lambda_m(\zeta)-\frac{n-1}{2}+m \notag\\
		&=
		\Lambda_m(\zeta)-\Lambda_b(\zeta)+\frac{n-1}{2}.
		\label{eq:19}
	\end{align}
	Therefore,
	\begin{equation}
		2\Lambda_b(\zeta)-2\Lambda_m(\zeta)-(n-1)
		=
		-2\bigl(b+\Lambda_b(\zeta^{-1})-\Lambda_{t-b}(\zeta^{-1})\bigr).
		\label{eq:20}
	\end{equation}
	
	Substituting \eqref{eq:16} and \eqref{eq:20} into \eqref{eq:13}, and changing variables from $(m,b)$ to $(t,b)$, we obtain
	\begin{equation}
		S(\zeta)
		=
		-2\sum_{t=0}^{n-1}\zeta^{-t-1}
		\sum_{b=0}^{t}
		\zeta^{-b^2}
		\genfrac{[}{]}{0pt}{}{t}{b}_{\zeta^{-1}}^{\!2}
		\bigl(b+\Lambda_b(\zeta^{-1})-\Lambda_{t-b}(\zeta^{-1})\bigr).
		\label{eq:21}
	\end{equation}
	
	But for each fixed $t$, the inner sum is exactly the identity \eqref{eq:aux} with
	\[
	q=\zeta^{-1},\qquad m=t.
	\]
	This substitution is permissible because $1-\zeta^{-j}\neq 0$ for $1\le j\le n-1$. Hence every inner sum in \eqref{eq:21} vanishes, and so
	\[
	S(\zeta)=0.
	\]
	
	\medskip
	\noindent
	\textbf{Step 3.} Consider
	\[
	F(q):=
	\sum_{m=0}^{n-1} q^m \sum_{b=0}^m \qbinom{m}{b}{q}^{\!2}
	\bigl(2\Lambda_b(q)-2\Lambda_m(q)-(n-1)\bigr).
	\]
	The only denominators appearing in $F(q)$ come from factors $1-q^j$ with $1\le j\le n-1$. Since
	\[
	\gcd(1-q^j,\Phi_n(q))=1 \qquad (1\le j\le n-1),
	\]
	all these denominators are invertible modulo $\Phi_n(q)$. Therefore,  $F(q)$ is well-defined in the quotient ring modulo $\Phi_n(q)$.
	
	Now for every primitive $n$th root of unity $\zeta$, we have shown that
	\[
	F(\zeta)=S(\zeta)=0.
	\]
	Therefore,
	\[
	F(q)\equiv 0 \pmod{\Phi_n(q)},
	\]
	that is,
	\[
	\sum_{m=0}^{n-1} q^m \sum_{b=0}^m \qbinom{m}{b}{q}^{\!2}
	\bigl(2\Lambda_b(q)-2\Lambda_m(q)-(n-1)\bigr)
	\equiv 0 \pmod{\Phi_n(q)}.
	\]
	The proof of Lemma \ref{L2} is now complete.
\end{proof}

\begin{lemma}\label{prop:strict-qLucas}
	Let $a,b,c,d$ be nonnegative integers satisfying $0\le b,d<n$. Set $A=a+c$ and $m=b+d<n$. Then,
	\begin{align}\label{eq:block-sum-main}
		\sum_{\substack{0\le b,d<n\\ b+d<n}}
		q^{An+b+d}
		\begin{bmatrix}
			An+b+d\\
			an+b
		\end{bmatrix}_q^{\!2}
		\equiv
		q^{n^2A}
		\begin{bmatrix}
			A\\
			a
		\end{bmatrix}_{q^{n^2}}^{\!2}
		\sum_{\substack{0\le b,d<n\\ b+d<n}}
		q^{b+d}
		\begin{bmatrix}
			b+d\\
			b
		\end{bmatrix}_q^{\!2}
		\pmod{\Phi_n(q)^2}.
	\end{align}
	
\end{lemma}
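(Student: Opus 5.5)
The plan is to compare both sides term by term through an exact factorization of the $q$-binomial coefficient, and to control the resulting first-order error with Lemma \ref{L2}. Throughout, $m=b+d<n$ and $c=A-a$. From $(q;q)_{An+m}=(q;q)_{An}(q^{An+1};q)_m$, and the analogous identities for $(q;q)_{an+b}$ and $(q;q)_{cn+d}$, we have exactly
\[
\begin{bmatrix} An+m\\ an+b\end{bmatrix}_q=\begin{bmatrix} An\\ an\end{bmatrix}_q\frac{(q^{An+1};q)_m}{(q^{an+1};q)_b\,(q^{cn+1};q)_d}.
\]
By Lemma \ref{L1}, the first factor is $\equiv \qbinom{A}{a}{q^{n^2}}\pmod{\Phi_n(q)^2}$, because the correction term $(q^n-1)^2$ vanishes modulo $\Phi_n(q)^2$.

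For the remaining quotient, we use that $1-q^{kn}\equiv k(1-q^n)\pmod{\Phi_n(q)^2}$. Writing $1-q^{kn+i}=(1-q^i)+q^i(1-q^{kn})$ and noting that $1-q^i$ is invertible modulo $\Phi_n(q)$ for $1\le i\le n-1$, this gives, for $0\le s<n$,
\[
(q^{kn+1};q)_s\equiv (q;q)_s\bigl(1+k(1-q^n)\Lambda_s(q)\bigr)\pmod{\Phi_n(q)^2}.
\]
Hence the $q$-binomial coefficient is congruent to
\[
\qbinom{A}{a}{q^{n^2}}\qbinom{m}{b}{q}\Bigl(1+(1-q^n)\bigl(A\Lambda_m-a\Lambda_b-c\Lambda_d\bigr)\Bigr).
\]
Squaring it, we keep only the linear term $2(1-q^n)(\cdots)$. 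For the powers of $q$, we compare $q^{An}\equiv 1-A(1-q^n)$ with $q^{n^2A}\equiv 1-nA(1-q^n)$, so that $q^{An}\equiv q^{n^2A}\bigl(1+(n-1)A(1-q^n)\bigr)\pmod{\Phi_n(q)^2}$.

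Collecting terms, the left side of \eqref{eq:block-sum-main} equals the right side plus $(1-q^n)\,q^{n^2A}\qbinom{A}{a}{q^{n^2}}^2$ times
\[
T:=\sum_{b+d<n}q^{m}\qbinom{m}{b}{q}^{2}\Bigl(2A\Lambda_m-2a\Lambda_b-2c\Lambda_d+(n-1)A\Bigr).
\]
Since $(1-q^n)$ already supplies one factor $\Phi_n(q)$, it suffices to show $T\equiv0\pmod{\Phi_n(q)}$. The summation range and the weight $q^m\qbinom{m}{b}{q}^2=q^m\qbinom{m}{d}{q}^2$ are symmetric under $b\leftrightarrow d$. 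Therefore $\sum(\cdots)\Lambda_d=\sum(\cdots)\Lambda_b$, and the middle terms combine into $-2A\Lambda_b$. This gives
\[
T=-A\sum_{m=0}^{n-1}q^m\sum_{b=0}^m\qbinom{m}{b}{q}^2\bigl(2\Lambda_b-2\Lambda_m-(n-1)\bigr),
\]
which is $\equiv0\pmod{\Phi_n(q)}$ by Lemma \ref{L2}.

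The main obstacle is the bookkeeping in the first-order expansion. One must check the exact signs and coefficients $A,a,c$, confirm that $q^{An}$ versus $q^{n^2A}$ produces exactly the $(n-1)A$ term needed to match Lemma \ref{L2}, and justify that every denominator $1-q^i$ with $i<n$ is invertible modulo $\Phi_n(q)$. Once the linear term is correctly identified, the symmetry argument together with Lemma \ref{L2} closes the proof.
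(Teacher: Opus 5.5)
Your proposal is correct and follows essentially the same route as the paper's proof. The shared steps are: the exact factorization through $\qbinom{An}{an}{q}$; Lemma \ref{L1} modulo $\Phi_n(q)^2$; the first-order expansion $(q^{kn+1};q)_s\equiv(q;q)_s\bigl(1+k(1-q^n)\Lambda_s(q)\bigr)$; the comparison of $q^{An}$ with $q^{n^2A}$, which yields the $(n-1)A$ term; and the $b\leftrightarrow d$ symmetry that reduces everything to Lemma \ref{L2}. Your signs and coefficients match the paper's $\Xi_{b,d}(q)-nA$ once you write $q^n-1=-(1-q^n)$.
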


\begin{proof}
 We start from the exact decomposition
	\begin{align*}
	\begin{bmatrix}
		An+m\\
		an+b
	\end{bmatrix}_q
	=
	\begin{bmatrix}
		An\\
		an
	\end{bmatrix}_q
	\begin{bmatrix}
		m\\
		b
	\end{bmatrix}_q
	\frac{(q^{An+1};q)_m/(q;q)_m}
	{\bigl((q^{an+1};q)_b/(q;q)_b\bigr)\bigl((q^{cn+1};q)_d/(q;q)_d\bigr)}.
\end{align*}
	Since $m,b,d<n$, all factors appearing in the denominator are coprime to  $\Phi_n(q)$. For
	$\Lambda_t(q):=\sum_{j=1}^t \frac{q^j}{1-q^j},$
	we have, for $0\le t<n$,
\begin{align*}
	\frac{(q^{sn+1};q)_t}{(q;q)_t}
	=
	\prod_{j=1}^t \frac{1-q^{sn+j}}{1-q^j}
	\equiv
	1-s(q^n-1)\Lambda_t(q)
	\pmod{\Phi_n(q)^2},
\end{align*}
	since $q^{sn}-1\equiv s(q^n-1)\pmod{\Phi_n(q)^2}$. Hence
	\begin{align*}
	\begin{bmatrix}
		An+m\\
		an+b
	\end{bmatrix}_q
	\equiv
	\begin{bmatrix}
		An\\
		an
	\end{bmatrix}_q
	\begin{bmatrix}
		m\\
		b
	\end{bmatrix}_q
	\Bigl(1+(q^n-1)\bigl(a\Lambda_b(q)+c\Lambda_d(q)-A\Lambda_m(q)\bigr)\Bigr)
	\pmod{\Phi_n(q)^2}.
\end{align*}
	Furthermore, applying Lemma \ref{L1} yields
	\begin{equation}\label{eq:first-order-local}
		\begin{bmatrix}
			An+m\\
			an+b
		\end{bmatrix}_q
		\equiv
		\begin{bmatrix}
			A\\
			a
		\end{bmatrix}_{q^{n^2}}
		\begin{bmatrix}
			m\\
			b
		\end{bmatrix}_q
		\Bigl(1+(q^n-1)\bigl(a\Lambda_b(q)+c\Lambda_d(q)-A\Lambda_m(q)\bigr)\Bigr)
		\pmod{\Phi_n(q)^2}.
	\end{equation}
	
	Squaring \eqref{eq:first-order-local} and multiplying by $q^{An+m}$, noting that
	\begin{align*}
	q^{An}\equiv 1+A(q^n-1)\pmod{\Phi_n(q)^2},
\end{align*}
	we get
		\begin{align*}
	q^{An+m}
	\begin{bmatrix}
		An+m\\
		an+b
	\end{bmatrix}_q^{\!2}
	\equiv
	q^m
	\begin{bmatrix}
		A\\
		a
	\end{bmatrix}_{q^{n^2}}^{\!2}
	\begin{bmatrix}
		m\\
		b
	\end{bmatrix}_q^{\!2}
	\Bigl(1+(q^n-1)\Xi_{b,d}(q)\Bigr)
	\pmod{\Phi_n(q)^2},
\end{align*}
	where
	$
	\Xi_{b,d}(q):=A+2a\Lambda_b(q)+2c\Lambda_d(q)-2A\Lambda_m(q).
	$
	On the other hand,
	$
	q^{n^2A}\equiv 1+nA(q^n-1)\pmod{\Phi_n(q)^2}.
	$
	Therefore \eqref{eq:block-sum-main} reduces to showing that
		\begin{align*}
	\sum_{\substack{0\le b,d<n\\ b+d<n}}
	q^m
	\begin{bmatrix}
		m\\
		b
	\end{bmatrix}_q^{\!2}
	\bigl(\Xi_{b,d}(q)-nA\bigr)
	\equiv 0
	\pmod{\Phi_n(q)}.
\end{align*}
	Since $A=a+c$, this is
\begin{align*}
	\sum_{\substack{0\le b,d<n\\ b+d<n}}
	q^m
	\begin{bmatrix}
		m\\
		b
	\end{bmatrix}_q^{\!2}
	&\Bigl\{a(2\Lambda_b(q)-2\Lambda_m(q)-(n-1))\\
	&+c(2\Lambda_d(q)-2\Lambda_m(q)-(n-1))\Bigr\}
	\equiv 0
	\pmod{\Phi_n(q)}.
\end{align*}
	Now use the symmetry $d=m-b$ and
\begin{align*}
	\begin{bmatrix}
		m\\
		b
	\end{bmatrix}_q=
	\begin{bmatrix}
		m\\
		d
	\end{bmatrix}_q.
\end{align*}
	The left-hand side becomes
\begin{align*}
	A\sum_{m=0}^{n-1} q^m \sum_{b=0}^m
	\begin{bmatrix}
		m\\
		b
	\end{bmatrix}_q^{\!2}
	\Bigl(2\Lambda_b(q)-2\Lambda_m(q)-(n-1)\Bigr),
\end{align*}
	which is congruent to $0\pmod{\Phi_n(q)}$ by Lemma \ref{L2}. This proves
	\eqref{eq:block-sum-main}.
	\end{proof}

We now proceed to prove Theorem \ref{thm-ApaZeil-qcon-12}.
\begin{proof}[Proof of Theorem \ref{thm-ApaZeil-qcon-12}]
Decompose the indices $i,j$ into residue classes modulo $n$:
\[
i=an+b,\qquad j=cn+d,
\]
where
\[
0\le a\le r-1,\quad 0\le c\le s-1,\quad 0\le b,d\le n-1.
\]
Then
\begin{align*}
	\sum_{i=0}^{rn-1}\sum_{j=0}^{sn-1}\frac{(q;q)_{i+j}^2}{(q;q)_i^2(q;q)_j^2}q^{i+j}
	&=\sum_{i=0}^{rn-1}\sum_{j=0}^{sn-1}q^{i+j}\qbinom{i+j}{i}{q}^2\\
	&=\sum_{a=0}^{r-1}\sum_{c=0}^{s-1}
	\sum_{b=0}^{n-1}\sum_{d=0}^{n-1}
	q^{(a+c)n+b+d}\qbinom{(a+c)n+b+d}{an+b}{q}^2.
\end{align*}
By Lemma \ref{prop:strict-qLucas}, for each fixed pair $(a,c)$,
\begin{align}\label{Multiplesum01}
&\sum_{b=0}^{n-1}\sum_{d=0}^{n-1}
q^{(a+c)n+b+d}
\begin{bmatrix}
	(a+c)n+b+d\\
	an+b
\end{bmatrix}_q^{\!2}\notag\\
&\quad\equiv
q^{n^2(a+c)}
\begin{bmatrix}
	a+c\\
	a
\end{bmatrix}_{q^{n^2}}^{\!2}
\sum_{b=0}^{n-1}\sum_{d=0}^{n-1}
q^{b+d}
\begin{bmatrix}
	b+d\\
	b
\end{bmatrix}_q^{\!2}
\pmod{\Phi_n(q)^2}.
\end{align}
Using \eqref{Hewang01} or the case $(a,r,s)=(1,1,1)$ of \eqref{thm-ApaZeil-lemma1-con2}, we obtain
\[
\sum_{b=0}^{n-1}\sum_{d=0}^{n-1}q^{b+d}
\begin{bmatrix}
	b+d\\
	b
\end{bmatrix}_q^{\!2}
\equiv
\left(\frac{n}{3}\right)q^{2(n^2-1)/3}
\pmod{\Phi_n(q)^2}.
\]
Finally, summing the $q$-congruence \eqref{Multiplesum01} over $a$ and $c$ gives
\begin{align*}
	\sum_{i=0}^{rn-1}\sum_{j=0}^{sn-1}q^{i+j}\qbinom{i+j}{i}{q}^2
	&\equiv
	\left(\frac{n}{3}\right)q^{2(n^2-1)/3}
	\sum_{a=0}^{r-1}\sum_{c=0}^{s-1}
	q^{n^2(a+c)}
	\begin{bmatrix}
		a+c\\
		a
	\end{bmatrix}_{q^{n^2}}^{\!2}
	\pmod{\Phi_n(q)^2}.
\end{align*}
 An equivalent form of \eqref{thm-ApaZeil-qcon-2} has now been obtained, and the proof of Theorem \ref{thm-ApaZeil-qcon-12} is finished.

\end{proof}

\end{document}